\newtheorem{lemma}{Lemma}
\newtheorem{theorem}{Theorem}
\newtheorem{proposition}{Proposition}
\theoremstyle{remark}
\newtheorem{remark}{Remark}
\newtheorem*{acknowledgements}{\bf Acknowledgements}
\let\lf\lfloor
\let\rf\rfloor
\let\ol\overline
\let\wt\widetilde
\renewcommand{\d}{{\mathrm d}}
\newcommand{\ord}{\operatorname{ord}}
\newcommand{\Res}{\operatorname{Res}}
\renewcommand{\Re}{\operatorname{Re}}
\newcommand\ba{{\boldsymbol a}}
\newcommand\bb{{\boldsymbol b}}
\begin{document}

\title[Irrationality measure of $\zeta(2)$]{Two hypergeometric tales\\ and a new irrationality measure of $\zeta(2)$}

\author{Wadim Zudilin}
\address{School of Mathematical and Physical Sciences,
The University of Newcastle, Callaghan NSW 2308, AUSTRALIA}
\email{wzudilin@gmail.com}

\date{24 October 2013. \emph{Revised}: 2 May 2014}

\thanks{The author is supported by the Australian Research Council.}
\subjclass[2010]{Primary 11J82; Secondary 11Y60, 33C20, 33C60}

\begin{abstract}
We prove the new upper bound $5.095412$ for the irrationality exponent of $\zeta(2)=\pi^2/6$;
the earlier record bound $5.441243$ was established in 1996 by G.~Rhin and C.~Viola.
\end{abstract}

\maketitle

\section{Introduction}
\label{intro}

The principal aim of this note is to prove the following result.

\begin{theorem}
\label{main}
The irrationality exponent $\mu(\zeta(2))$ of $\zeta(2)=\pi^2/6$ is bounded from above by
$5.09541178\dots$\,.
\end{theorem}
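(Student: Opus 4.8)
The plan is to construct an explicit sequence of very good rational approximations to $\zeta(2)$ and feed it into the standard quantitative irrationality criterion. Concretely one wants $\mathbb{Q}$-linear forms $r_n=a_n\zeta(2)-b_n$ with: (i)~$r_n\neq0$ for all large $n$; (ii)~$\frac1n\log|r_n|\to-C_0$ with $C_0>0$; (iii)~$\frac1n\log\max(|a_n|,|b_n|)\to C_1$; (iv)~$D_na_n,\,D_nb_n\in\mathbb{Z}$ for an explicit denominator sequence $D_n$ with $\frac1n\log D_n\to C_2<C_0$. The criterion then yields
\[
\mu(\zeta(2))\ \le\ 1+\frac{C_1+C_2}{C_0-C_2},
\]
so the whole problem reduces to pinning down $C_0,C_1,C_2$ sharply enough that the right-hand side equals $5.09541178\dots$\,.

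For the construction I would use a Rhin--Viola type double integral
\[
r_n=\int_0^1\!\!\int_0^1\frac{x^{an}(1-x)^{bn}\,y^{cn}(1-y)^{dn}}{(1-xy)^{(e+1)n}}\,\d x\,\d y
\]
for a fixed admissible rational tuple $(a,b,c,d,e)$ --- subject to the linear relations among the exponents that force the integral to be a $\mathbb{Q}$-linear form in $1$ and $\zeta(2)$ --- and expand it by partial fractions in $1-xy$ to read off $a_n$, $b_n$ and the a~priori denominator, a product of factors $\operatorname{lcm}(1,\dots,\lfloor M_jn\rfloor)$. The point of the ``two tales'' of the title is that this same linear form $r_n$ also admits a closed hypergeometric incarnation --- as a hypergeometric series value (a ${}_3F_2(1)$, or a Barnes-type contour integral) --- in which $a_n$ and $b_n$ appear as explicit ratios of factorials, so that their $p$-adic valuations become visible. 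Running the Rhin--Viola arithmetic argument with the group $\mathfrak{G}$ generated jointly by the birational symmetries of the integral and the hypergeometric transformations of its series avatar, one shows that a large product $\Phi_n$ of primes --- those lying in suitable ranges determined by the $\mathfrak{G}$-action on the parameters --- divides $D_na_n$ and $D_nb_n$, so that $D_n$ may be replaced by $D_n/\Phi_n$. This is exactly where one gains over the bound $5.441243$ of Rhin and Viola.

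The analytic inputs $C_0$ and $C_1$ come from a routine saddle-point (Laplace) analysis: $C_0$ is minus the value at its real interior critical point of the exponential weight $a\log x+b\log(1-x)+c\log y+d\log(1-y)-(e+1)\log(1-xy)$, and $C_1$ is obtained the same way from the companion integral representing $a_n$ (equivalently, from the dominant term of the hypergeometric series). The arithmetic exponent is $C_2=\big(\sum_j M_j\big)-\lim\frac1n\log\Phi_n$, the last limit being an integral over the parameter domain of a step function built from the $\mathfrak{G}$-action, which one evaluates by Chebyshev-type prime estimates (the familiar $\lfloor\cdot\rfloor$-combinations weighted with $\log p$). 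Finally one minimizes $1+\frac{C_1+C_2}{C_0-C_2}$ over the admissible tuples --- a low-dimensional constrained numerical optimization --- and checks that the minimum equals $5.09541178\dots$\,.

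I expect the arithmetic step to be the real obstacle: correctly identifying the full group $\mathfrak{G}$ --- in particular checking that the transformations coming from the hypergeometric side genuinely combine with the geometric ones into a group acting on the parameter simplex as claimed --- and then rigorously proving that the resulting $\Phi_n$ divides $D_na_n$ and $D_nb_n$, since all the improvement lives there. Secondary but delicate points: the non-vanishing $r_n\neq0$ (handled by a positivity/sign argument on the integral, or by exhibiting its hypergeometric value as a nonzero combination); the \emph{exactness} of the limits in (ii)--(iii) rather than mere one-sided bounds, as needed for the clean form of the criterion; and making sure that the analytic and the arithmetic estimates refer to one and the same normalization of $r_n$ before the optimization is carried out.
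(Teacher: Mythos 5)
Your skeleton --- linear forms $r_n=q_n\zeta(2)-p_n$, saddle-point constants $C_0,C_1$, an arithmetic correction $\Phi_n$ stripped from the a priori denominator, and Hata's criterion in the equivalent form $1+\frac{C_1+C_2}{C_0-C_2}=\frac{C_0+C_1}{C_0-C_2}$ --- is indeed the frame on which the paper's proof hangs, and your list of delicate points (non-vanishing, exactness of limits, matching normalizations) is sensible. But the proposal is missing the one idea that actually yields $5.09541178\dots$ rather than Rhin--Viola's $5.44124250\dots$. The group you invoke, ``generated jointly by the birational symmetries of the double integral and the hypergeometric transformations of its ${}_3F_2$/Barnes avatar,'' is in substance the Rhin--Viola permutation group itself: those two sources of symmetry coincide, and optimizing over admissible exponent tuples inside that framework can only reproduce the $5.44\dots$ record. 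Indeed, in the paper the first (Barnes-integral, ${}_4F_3$-type) construction with its full permutation-group correction $\varphi$ gives only $5.577\dots$ for the chosen parameters, and at best $5.205\dots$ for other choices. So your concluding step ``minimize and check that the minimum equals $5.095\dots$'' has no mechanism behind it.

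The paper's decisive input is a \emph{second}, genuinely different hypergeometric representation of the very same linear forms: besides the Barnes integral with kernel $(\pi/\sin\pi t)^2$ attached to the parameters $a=(7n+1,6n+1,5n+1,8n+1)$, $b=(1,n+1,2n+1,14n+2)$, the form $r_n$ equals a Barnes integral with kernel $\pi/\sin2\pi t$ whose rational function contains the doubled-argument factor $\Gamma(2t+11n+2)/\Gamma(2t+2n+2)$ --- the ${}_5F_4$ side of Whipple's transformation \eqref{whipple}. This identity (Proposition~\ref{prop:int}) is not available in the literature in the needed generality; it is proved by creative telescoping, showing both integrals satisfy one and the same third-order recurrence and checking the cases $n=0,1,2$. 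Its arithmetic payoff is that the second representation gives a different $p$-adic lower bound $\hat\varphi(n/p)$ for $\operatorname{ord}_p q_n$ and $\operatorname{ord}_p(D_{9n}D_{8n}p_n)$; since the two constructions produce identical $q_n,p_n$, one may take $\tilde\varphi=\max\{\varphi,\hat\varphi\}$, enlarging the removable factor to $\tilde\Phi_n$ with $\lim_n\frac1n\log\tilde\Phi_n=8.791\dots$, hence $\tilde C_2=8.208\dots$, and only then does $\frac{C_0+C_1}{C_0-\tilde C_2}$ with $C_0=15.885\dots$, $C_1=23.229\dots$ come out to $5.09541178\dots$. You flag the arithmetic step as ``the real obstacle'' but leave it unresolved; without the second construction and the telescoping proof of the identity (or an equivalent source of extra $p$-adic savings beyond the Rhin--Viola group), the argument as proposed cannot reach the stated bound.
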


Recall that the irrationality exponent $\mu(\alpha)$ of a real number $\alpha$ is the supremum
of the set of exponents $\mu$ for which the inequality $|\alpha-p/q|<q^{-\mu}$ has infinitely many
solutions in rationals $p/q$.

The history of $\mu(\zeta(2))$ can be found in the 1996 paper \cite{RV96} of G.~Rhin and C.~Viola,
where they not only establish the previous record estimate $\mu(\zeta(2))\le5.44124250\dots$
but also introduce the remarkable \emph{permutation group} arithmetic method based
on birational transformations of underlying multiple integrals.

One of the corollaries of Theorem~\ref{main} is the estimate
$\mu(\pi\sqrt{d})\le10.19082357\dots$ valid for any choice of nonzero rational~$d$.
Note, however, that for some particular values of~$d$ better bounds are known:
the results
$$
\mu(\pi)\le7.606308\dots,
\quad
\mu(\pi\sqrt3)\le4.601057\dots
\quad\text{and}\quad
\mu(\pi\sqrt{10005})\le10.021363\dots
$$
are due to V.~Salikhov~\cite{Sa08}, V.~Androsenko and V.~Salikhov \cite{AS11},
and the present author~\cite{Zu05}, respectively.

A particular case of the hypergeometric constructions below was discussed in~\cite[Section~1.3]{Zu11}
(see also \cite[Section~2]{Zu07})
in relation with simultaneous rational approximations to $\zeta(2)$ and $\zeta(3)$.
In the joint paper \cite{DZ} with S.~Dauguet we address these simultaneous approximations
more specifically.

Our proof of Theorem~\ref{main} below is organised as follows. In Section~\ref{BSI} we introduce
some analytical and arithmetic ingredients, while Sections \ref{gc1} and~\ref{app} are devoted
to exposing details of our first hypergeometric construction of rational approximations to $\zeta(2)$;
Sections \ref{BSI}--\ref{app} are closely related to the corresponding material in~\cite{DZ}.
In Section \ref{sbailey} we discuss an identity between two hypergeometric integrals that
motivates another hypergeometric construction of approximations to $\zeta(2)$, the construction
we further examine in Section~\ref{s1}. We finalise our findings in Section~\ref{epi}, where we prove
Theorem~\ref{main} and comment on related hypergeometric constructions.

\section{Prelude: auxiliary lemmata}
\label{BSI}

This section discusses auxiliary results about decomposition of Barnes--Mellin-type integrals
and special arithmetic of integer-valued polynomials.

\begin{lemma}
\label{barnes}
For $\ell=0,1,2,\dots$,
\begin{equation}
\frac1{2\pi i}\int_{1/2-i\infty}^{1/2+i\infty}\biggl(\frac\pi{\sin\pi t}\biggr)^2
\frac{(t-1)(t-2)\dotsb(t-\ell)}{\ell!}\,\d t
=\frac{(-1)^\ell}{\ell+1}.
\label{expp}
\end{equation}
\end{lemma}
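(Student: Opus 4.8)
The plan is to recognise the integrals in \eqref{expp}, as $\ell$ ranges over $0,1,2,\dots$, as the Taylor coefficients of a single generating function. Since $(t-1)(t-2)\dotsb(t-\ell)/\ell!=\binom{t-1}{\ell}$ and $\sum_{\ell\ge0}\binom{t-1}{\ell}x^\ell=(1+x)^{t-1}$, I would introduce, for real $u$ near~$1$,
\[
F(u)=\frac1{2\pi i}\int_{1/2-i\infty}^{1/2+i\infty}\biggl(\frac\pi{\sin\pi t}\biggr)^2u^{t-1}\,\d t .
\]
Writing $u=1+x$ and interchanging the summation with the integration — legitimate for $|x|$ small since $(\pi/\sin\pi t)^2$ decays like $e^{-2\pi|\Im t|}$ on the line $\Re t=\tfrac12$ while the binomial coefficients grow only mildly there — yields $F(1+x)=\sum_{\ell\ge0}I_\ell x^\ell$, where $I_\ell$ denotes the integral on the left of \eqref{expp}. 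Thus \eqref{expp} is equivalent to the closed form $F(u)=\log u/(u-1)$, whose Taylor expansion at $u=1$ is $\sum_{\ell\ge0}(-1)^\ell(u-1)^\ell/(\ell+1)$.

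To obtain this closed form I would fix $u$ with $0<u<1$ and move the line of integration from $\Re t=\tfrac12$ towards $\Re t=+\infty$. On the vertical line $\Re t=N+\tfrac12$ (with $N$ a positive integer) we have $|\pi/\sin\pi t|^2=\pi^2/\cosh^2(\pi\Im t)\le\pi^2$ while $|u^{t-1}|=u^{N-1/2}\to0$, and the horizontal parts of the shift vanish thanks to the exponential decay of $(\pi/\sin\pi t)^2$ in $\Im t$; hence $F(u)$ equals minus the sum of the residues of $(\pi/\sin\pi t)^2u^{t-1}$ at the double poles $t=1,2,3,\dots$ of $(\pi/\sin\pi t)^2$. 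From the local expansions $(\pi/\sin\pi t)^2=(t-n)^{-2}+\tfrac{\pi^2}3+O((t-n)^2)$ and $u^{t-1}=u^{n-1}\bigl(1+(t-n)\log u+O((t-n)^2)\bigr)$ near $t=n$, the residue at $t=n$ is $u^{n-1}\log u$, and summing the resulting geometric series gives
\[
F(u)=-\log u\sum_{n\ge1}u^{n-1}=\frac{\log u}{u-1}\qquad(0<u<1).
\]

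It remains to promote this to the coefficientwise assertion \eqref{expp}. The bound $|(\pi/\sin\pi t)^2u^{t-1}|\le C\,e^{-(2\pi-\varepsilon)|\Im t|}$, uniform for $u$ in a small complex disc about~$1$, shows that $F$ is holomorphic near $u=1$ and that differentiation under the integral sign is permitted; since $\log u/(u-1)$ also extends holomorphically across $u=1$ and the two functions agree on $(0,1)$, they agree near $u=1$, and comparing the coefficients of $(u-1)^\ell$ — which on the left are exactly the $I_\ell$, because $\partial_u^\ell u^{t-1}\big|_{u=1}=(t-1)(t-2)\dotsb(t-\ell)$ — completes the proof. Equivalently, from $(1+x)\bigl(xF(1+x)\bigr)'=1$ one extracts $I_0=\lim_{u\to1^-}F(u)=1$ together with the recursion $(\ell+1)I_\ell+\ell I_{\ell-1}=0$, which solves to $(-1)^\ell/(\ell+1)$. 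The only genuine work here is the analytic bookkeeping, all of it resting on the exponential decay of $(\pi/\sin\pi t)^2$ along vertical lines; the point I expect to need the most care is that the bare integral in \eqref{expp} cannot be closed up directly — its integrand does not decay as $\Re t\to\pm\infty$ (indeed it grows like $t^\ell$) — so the deformation parameter $u<1$ is introduced precisely to supply the missing decay, after which the residue computation is a one-line Laurent expansion.
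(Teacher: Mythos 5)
Your proof is correct, but it takes a genuinely different route from the paper's. The paper disposes of the lemma in two lines: writing $(\pi/\sin\pi t)^2=\Gamma(t)^2\Gamma(1-t)^2$ and $(t-1)(t-2)\dotsb(t-\ell)/\ell!=\Gamma(t)/(\ell!\,\Gamma(t-\ell))$, and using the reflection formula, the integrand becomes $\frac{(-1)^\ell}{\ell!}\Gamma(t)^2\Gamma(1-t)\Gamma(1+\ell-t)$, and Barnes's first lemma (with parameters $a=b=0$, $c=1$, $d=1+\ell$) gives $\frac{(-1)^\ell}{\ell!}\cdot\frac{\ell!\,\ell!}{(\ell+1)!}=\frac{(-1)^\ell}{\ell+1}$ at once. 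You instead bundle all $\ell$ into the generating integral $F(u)=\frac1{2\pi i}\int(\pi/\sin\pi t)^2u^{t-1}\,\d t$, evaluate $F(u)=\log u/(u-1)$ for $0<u<1$ by pushing the contour to the right and summing the residues $u^{n-1}\log u$ at the double poles $t=n$, and then identify Taylor coefficients at $u=1$; this is in effect a self-contained re-derivation of exactly the special case of Barnes's lemma that is needed (the classical proof of that lemma is the same kind of residue computation), and it has the virtue of making the value $(-1)^\ell/(\ell+1)$ transparent while matching in spirit the contour-shift proof of Lemma~\ref{nest}. The cost is the analytic bookkeeping: your justification of the sum--integral interchange (``the binomial coefficients grow only mildly'') is loose, since on $\Re t=\tfrac12$ one really needs a bound such as $\sum_{\ell}\bigl|\binom{t-1}{\ell}\bigr||x|^\ell\le(1-|x|)^{-|t-1|}$, whose exponential rate $\log\frac1{1-|x|}$ in $|\Im t|$ is dominated by the $e^{-2\pi|\Im t|}$ decay once $|x|$ is small; with that (or simply with your differentiation-under-the-integral argument, which the same uniform bound legitimises) the proof is complete. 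In short, the paper's route buys brevity and keeps the computation inside the hypergeometric toolkit used throughout; yours buys independence from the classical tables at the price of a page of elementary complex analysis.
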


\begin{proof}
The integrand is
$$
\frac1{\ell!}\biggl(\frac\pi{\sin\pi t}\biggr)^2
\frac{\Gamma(t)}{\Gamma(t-\ell)}
=\frac{(-1)^\ell}{\ell!}\,\Gamma(t)^2\Gamma(1-t)\,\Gamma(1+\ell-t);
$$
the evaluation in \eqref{expp} follows from Barnes's first lemma \cite[Section 4.2.1]{Sl}.
\end{proof}

\begin{lemma}
\label{nest}
For $k=0,1,2,\dots$,
\begin{equation}
\frac1{2\pi i}\int_{1/2-i\infty}^{1/2+i\infty}\biggl(\frac\pi{\sin\pi t}\biggr)^2\frac{\d t}{t+k}
=\sum_{m=1}^\infty\frac1{(m+k)^2}
=\zeta(2)-\sum_{\ell=1}^k\frac1{\ell^2}.
\label{exp2}
\end{equation}
\end{lemma}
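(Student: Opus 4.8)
The plan is to get both equalities almost for free. The second one is mere reindexing, $\sum_{m=1}^\infty(m+k)^{-2}=\sum_{n=k+1}^\infty n^{-2}=\zeta(2)-\sum_{\ell=1}^k\ell^{-2}$, so the whole content is the evaluation of the Barnes--Mellin-type integral on the left, and for this I would shift the line of integration to the right and collect residues. Since $k\ge0$, the factor $1/(t+k)$ is holomorphic at every positive integer, so to the right of the line $\Re t=\tfrac12$ the integrand has poles only at $t=n$, $n=1,2,\dots$, where $\bigl(\pi/\sin\pi t\bigr)^2$ has a double pole with local expansion $(t-n)^{-2}+\tfrac{\pi^2}3+O((t-n)^2)$; hence
$$
\Res_{t=n}\biggl(\frac\pi{\sin\pi t}\biggr)^2\frac1{t+k}
=\frac{\d}{\d t}\biggl(\frac1{t+k}\biggr)\bigg|_{t=n}
=-\frac1{(n+k)^2}.
$$

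Next I would close the contour by rectangles whose right-hand side runs along $\Re t=N+\tfrac12$, where $1/\sin\pi t$ is $O(1)$ and decays like $e^{-\pi|\Im t|}$; the three added sides then contribute $o(1)$ as $N\to\infty$ (one has to split the right-hand side into the ranges $|\Im t|\le\sqrt N$ and $|\Im t|>\sqrt N$ to see the decay). The resulting loop encircles $t=1,\dots,N$ clockwise, so
$$
\frac1{2\pi i}\int_{1/2-i\infty}^{1/2+i\infty}\biggl(\frac\pi{\sin\pi t}\biggr)^2\frac{\d t}{t+k}
=-\sum_{n=1}^\infty\Res_{t=n}\biggl(\frac\pi{\sin\pi t}\biggr)^2\frac1{t+k}
=\sum_{n=1}^\infty\frac1{(n+k)^2},
$$
which is \eqref{exp2}. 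The only genuine obstacle here is the justification of the contour shift, i.e.\ the estimates on the horizontal sides and on the moving vertical side of the rectangles; this is entirely classical for $\csc^2$-type kernels and requires no new idea.

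Alternatively — and this is closer in spirit to Lemma~\ref{barnes}, which records exactly the moments of $\bigl(\pi/\sin\pi t\bigr)^2$ against the binomial basis — one may use Lemma~\ref{barnes} together with $(1+x)^{t-1}=\sum_{\ell\ge0}\binom{t-1}{\ell}x^\ell$ to obtain, for $-1<x<0$,
$$
\frac1{2\pi i}\int_{1/2-i\infty}^{1/2+i\infty}\biggl(\frac\pi{\sin\pi t}\biggr)^2(1+x)^{t-1}\,\d t
=\sum_{\ell=0}^\infty\frac{(-1)^\ell}{\ell+1}x^\ell=\frac{\log(1+x)}x,
$$
then integrate this identity against $(1+x)^k\,\d x$ over $(-1,0)$ and use $\int_{-1}^0(1+x)^{t+k-1}\,\d x=1/(t+k)$; the substitution $u=1+x$ turns the right-hand side into $\int_0^1(-\log u)\,u^k/(1-u)\,\d u=\sum_{m\ge1}(m+k)^{-2}$. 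Here the price is justifying the two interchanges of $\sum$ and $\int$, which is routine by absolute convergence on $\Re t=\tfrac12$ for $|x|<1$. Either way, I expect the analytic bookkeeping of the contour (respectively, of the interchanges) to be the main, though entirely standard, point.
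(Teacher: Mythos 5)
Your main argument (the contour shift) is correct and is in essence the paper's proof: the paper also closes the path by rectangles with vertices $1/2\pm iN$, $N+1/2\pm iN$ and sums residues at $t=1,\dots,N$. The one difference is that the paper first integrates by parts, using $(\pi/\sin\pi t)^2\,\d t=\d(-\pi\cot\pi t)$, so the integrand becomes $-\pi\cot(\pi t)/(t+k)^2$: the poles at $t=n$ are then simple with residue $1/(n+k)^2$, and the vanishing of the extra sides of the rectangle follows at once from the factor $(t+k)^{-2}$ together with the boundedness of $\cot\pi t$ on those segments. You instead keep the double poles of $\csc^2$ and compute $\Res_{t=n}=-1/(n+k)^2$ directly, which is fine, and your sign bookkeeping for the clockwise loop is right; for the right vertical side your splitting at $|\Im t|=\sqrt N$ is not needed, since on $\Re t=N+\tfrac12$ one has $|\sin\pi t|=\cosh(\pi\Im t)\ge1$, so the integrand is bounded by $\pi^2\cosh^{-2}(\pi\,\Im t)\cdot(N+\tfrac12)^{-1}$, giving $O(1/N)$ outright. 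Your alternative derivation from Lemma~\ref{barnes} via the generating function $(1+x)^{t-1}$ and integration against $(1+x)^k\,\d x$ is a genuinely different route from the paper's; it works modulo the two routine Fubini interchanges you mention (the binomial coefficients grow only polynomially in $\ell$ and subexponentially in $|\Im t|$ compared with the decay of the kernel), and it is a pleasant observation that the moment identities of Lemma~\ref{barnes} already encode Lemma~\ref{nest}, though it is more roundabout than the residue computation.
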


\begin{proof}
Since
$$
\biggl(\frac\pi{\sin\pi t}\biggr)^2\d t
=\d(-\pi\,\cot\pi t),
$$
partial integration on the left-hand side in~\eqref{exp2} transforms the integral into
$$
-\frac1{2\pi i}\int_{1/2-i\infty}^{1/2+i\infty}\frac{\pi\,\cot\pi t\,\d t}{(t+k)^2}.
$$
By considering first integration along the rectangular closed contour with vertices at
$(1/2\pm iN,N+1/2\pm iN)$, where $N>0$ is an integer, applying the residue sum theorem
as in \cite[Lemma~2.4]{Zu02} and finally letting $N\to\infty$, we arrive at claim~\eqref{exp2}.
\end{proof}

\begin{remark}
\label{rem1}
The form and principal ingredients of Lemma~\ref{nest} are suggested by \cite[Lemma~2]{Ne96}.
The statement is essentially a particular case of \cite[Lemma~2.4]{Zu02}, where an artificial
assumption on the growth of a regular rational function at infinity was used; the assumption
can be dropped out by applying partial integration as above.
\end{remark}

In what follows $D_n$ denotes the least common multiple of the numbers $1,2,\dots,n$.

\begin{lemma}
\label{iv1}
Given $b<a$ integers, set
$$
R(t)=R(a,b;t)=\frac{(t+b)(t+b+1)\dotsb(t+a-1)}{(a-b)!}.
$$
Then
$$
R(k)\in\mathbb Z, \quad D_{a-b}\cdot\frac{\d R(t)}{\d t}\bigg|_{t=k}\in\mathbb Z
\quad\text{and}\quad
D_{a-b}\cdot\frac{R(k)-R(\ell)}{k-\ell}\in\mathbb Z
$$
for any $k,\ell\in\mathbb Z$, $\ell\ne k$.
\end{lemma}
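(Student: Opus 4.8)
The plan is to deduce all three assertions from a single application of the Chu--Vandermonde summation. Writing $n=a-b$ and shifting the variable one sees that
$$
R(t)=\frac{(t+b)(t+b+1)\dotsb(t+a-1)}{n!}=\binom{t+a-1}{n},
$$
so $R(k)$ is a binomial coefficient at an integer argument, and the first claim follows from the classical fact that a product of $n$ consecutive integers is divisible by $n!$.

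Next I would apply Vandermonde's identity $\binom{x+y}{n}=\sum_{m=0}^{n}\binom xm\binom y{n-m}$ with $x=t-\ell$ and $y=\ell+a-1$ (here $\ell\in\mathbb Z$ is fixed); since the $m=0$ term equals $R(\ell)$, this gives
$$
R(t)-R(\ell)=\sum_{m=1}^{n}\binom{t-\ell}{m}\binom{\ell+a-1}{n-m}
$$
as polynomials in $t$. Dividing by $t-\ell$ and using $\binom{t-\ell}{m}=\dfrac{t-\ell}{m}\binom{t-\ell-1}{m-1}$ yields the key identity
$$
\frac{R(t)-R(\ell)}{t-\ell}=\sum_{m=1}^{n}\frac1m\binom{t-\ell-1}{m-1}\binom{\ell+a-1}{n-m},
$$
again an identity of polynomials in $t$ (the left-hand side is a polynomial because $R(t)-R(\ell)$ vanishes at $t=\ell$). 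Evaluating at an integer $t=k\neq\ell$: every factor $\binom{k-\ell-1}{m-1}$ and $\binom{\ell+a-1}{n-m}$ is an integer (binomial coefficients take integer values at all integer arguments, negative ones included), while $m\mid D_m\mid D_n$ for $1\le m\le n$; hence multiplying through by $D_n$ clears every denominator and proves the third claim. Letting $t\to k$ in the same identity and using $\binom{-1}{m-1}=(-1)^{m-1}$ gives
$$
\frac{\d R(t)}{\d t}\bigg|_{t=k}=\sum_{m=1}^{n}\frac{(-1)^{m-1}}{m}\binom{k+a-1}{n-m},
$$
so the identical divisibility argument delivers $D_{a-b}\cdot R'(k)\in\mathbb Z$.

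I do not anticipate a genuine obstacle; the whole difficulty lies in spotting that Vandermonde packages precisely the required cancellation. The only points needing a word of care are the bookkeeping of the shift $t\mapsto t+a-1$, the remark that $\binom Nm\in\mathbb Z$ for every $N\in\mathbb Z$ so that the displayed polynomial identities may be specialised at arbitrary integers $k$ and $\ell$, and the observation that the divided difference is a polynomial in $t$, which legitimises the limit $t\to k$ used for the derivative. A more pedestrian alternative would verify the two divisibilities prime by prime via Kummer's theorem on carries in base $p$, but this seems strictly more laborious than the route above.
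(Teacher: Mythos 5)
Your proof is correct, but it follows a genuinely different route from the paper. The paper dispatches the first two inclusions by citation (they are classical, see \cite{Zu04}) and proves the third by an abstract integer-valued-polynomial argument: it sets $P(t)=(R(t)-R(\ell))/(t-\ell)$, notes that $D_{a-b}P(k)\in\mathbb Z$ for the $a-b$ successive integers $k=\ell+1,\dots,\ell+a-b$ (because there $D_{a-b}/(k-\ell)\in\mathbb Z$ and $R(k)-R(\ell)\in\mathbb Z$), and then invokes P\'olya--Szeg\H o to conclude that the degree-$(a-b-1)$ polynomial $D_{a-b}P(t)$ is integer-valued everywhere. You instead identify $R(t)=\binom{t+a-1}{a-b}$ and use the Chu--Vandermonde summation together with the absorption identity $\binom{z}{m}=\frac zm\binom{z-1}{m-1}$ to produce an explicit expansion of $(R(t)-R(\ell))/(t-\ell)$ whose only denominators are $1/m$ with $m\le a-b$, and you get the derivative case for free by specialising $\ell=k$ and $t\to k$ via $\binom{-1}{m-1}=(-1)^{m-1}$; the identification of the divided difference at $t=k$ with $R'(k)$ is justified correctly. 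Your route buys a self-contained treatment of all three inclusions (no appeal to \cite{Zu04} or to the P\'olya--Szeg\H o criterion) and explicit formulas exhibiting the denominators, at the cost of a computation; the paper's route is shorter modulo its references and generalises painlessly to products of such factors (which is what Lemma~\ref{iv2} then handles by Leibniz-type splitting, an induction your explicit formulas would not directly support).
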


\begin{proof}
Denote by $m=b-a$ the degree of the polynomial $R(t)$.
The first two family of inclusions are classical~\cite{Zu04}. For the remaining one, introduce
the polynomial $P(t)=(R(t)-R(\ell))/(t-\ell)$ of degree $m-1$. As $D_m\cdot1/(k-\ell)$
is an integer for $k=\ell+1,\ell+2,\dots,\ell+m$ as well as $R(k)-R(\ell)\in\mathbb Z$, we deduce that
$D_m\cdot P(k)\in\mathbb Z$ for those values of~$k$. This means that the polynomial $D_mP(t)$
of degree $m-1$ assumes integer values at $m$~successive integers; by \cite[Division~8, Problem~87]{PS}
the polynomial is integer-valued.
\end{proof}

\begin{lemma}
\label{iv2}
Let $R(t)$ be a product of several integer-valued polynomials
$$
R_j(t)=R(a_j,b_j;t)=\frac{(t+b_j)(t+b_j+1)\dotsb(t+a_j-1)}{(a_j-b_j)!}, \quad\text{where}\; b_j<a_j;
$$
and $m=\max_j\{a_j-b_j\}$. Then
\begin{equation}
R(k)\in\mathbb Z, \quad D_m\cdot\frac{\d R(t)}{\d t}\bigg|_{t=k}\in\mathbb Z
\quad\text{and}\quad
D_m\cdot\frac{R(k)-R(\ell)}{k-\ell}\in\mathbb Z
\label{iv-incl}
\end{equation}
for any $k,\ell\in\mathbb Z$, $\ell\ne k$.
\end{lemma}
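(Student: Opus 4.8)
The plan is to reduce everything to the single-factor case already settled in Lemma~\ref{iv1} and to the elementary fact that $D_{m_1}\mid D_m$ whenever $m_1\le m$. The first inclusion, $R(k)\in\mathbb Z$, is immediate: each $R_j(k)\in\mathbb Z$ by Lemma~\ref{iv1}, and a product of integers is an integer.

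For the derivative, I would apply the Leibniz rule: writing $R=\prod_j R_j$, one has
$$
\frac{\d R(t)}{\d t}\bigg|_{t=k}=\sum_j R_j'(k)\prod_{i\ne j}R_i(k).
$$
For each fixed $j$, Lemma~\ref{iv1} gives $D_{a_j-b_j}\cdot R_j'(k)\in\mathbb Z$, while the remaining factors $R_i(k)$ are already integers; since $a_j-b_j\le m$ we have $D_{a_j-b_j}\mid D_m$, so $D_m\cdot R_j'(k)\prod_{i\ne j}R_i(k)\in\mathbb Z$. Summing over $j$ finishes this part.

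For the divided difference I would use the telescoping identity
$$
\frac{R(k)-R(\ell)}{k-\ell}
=\sum_j\Bigl(\prod_{i<j}R_i(\ell)\Bigr)\cdot\frac{R_j(k)-R_j(\ell)}{k-\ell}\cdot\Bigl(\prod_{i>j}R_i(k)\Bigr),
$$
which is the standard "hybrid" decomposition of a difference of products (each summand changes exactly one factor from its value at $\ell$ to its value at $k$). Here every $\prod_{i<j}R_i(\ell)$ and $\prod_{i>j}R_i(k)$ is an integer by the first inclusion, and $D_{a_j-b_j}\cdot(R_j(k)-R_j(\ell))/(k-\ell)\in\mathbb Z$ by Lemma~\ref{iv1}; again $D_{a_j-b_j}\mid D_m$, so multiplying the whole sum by $D_m$ lands in $\mathbb Z$.

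The only point requiring a modicum of care—and the place I would flag as the main obstacle—is getting the hybrid/telescoping identity exactly right so that each summand really does differ from its neighbour in a single factor; once that bookkeeping is in place the arithmetic is routine, resting entirely on Lemma~\ref{iv1} and the divisibility $D_{a_j-b_j}\mid D_m$.
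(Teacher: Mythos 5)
Your proposal is correct and is essentially the paper's own argument: both reduce to the single-factor statements of Lemma~\ref{iv1} via the Leibniz rule for the derivative and the ``change one factor at a time'' decomposition of $R(k)\wt R(k)-R(\ell)\wt R(\ell)$, together with the divisibility $D_{a_j-b_j}\mid D_m$. The only cosmetic difference is that the paper proves the two-factor case and invokes induction on the number of factors, whereas you write out the fully unrolled multi-factor telescoping identity directly; both versions are valid.
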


\begin{proof}
It is sufficient to establish the result for a product of just two polynomials $R(t)$ and $\wt R(t)$
satisfying the inclusions~\eqref{iv-incl}, and then use mathematical induction on the number of such factors.
We have
\begin{align*}
\bigl(R(t)\wt R(t)\bigr)\big|_{t=k}
&=R(k)\wt R(k)
\in\mathbb Z,
\\
D_m\frac{\d(R(t)\wt R(t))}{\d t}\bigg|_{t=k}
&=R(k)\cdot D_m\frac{\d\wt R(t)}{\d t}\bigg|_{t=k}
+D_m\frac{\d R(t)}{\d t}\bigg|_{t=k}\cdot\wt R(k)
\in\mathbb Z
\\
D_m\frac{R(k)\wt R(k)-R(\ell)\wt R(\ell)}{k-\ell}
&=D_m\frac{R(k)-R(\ell)}{k-\ell}\cdot\wt R(k)
+R(\ell)\cdot D_m\frac{\wt R(k)-\wt R(\ell)}{k-\ell}
\in\mathbb Z,
\end{align*}
and the result follows.
\end{proof}

\section{First hypergeometric tale}
\label{gc1}

The construction in this section is a general case of the one considered in~\cite[Section 2]{Zu07}.

For a set of parameters
\begin{equation*}
(\ba,\bb)=\biggl(\begin{matrix} a_1, \, a_2, \, a_3, \, a_4 \\ b_1, \, b_2, \, b_3, \, b_4 \end{matrix}\biggr)
\end{equation*}
subject to the conditions
\begin{equation}
\begin{gathered}
b_1,b_2,b_3\le a_1,a_2,a_3,a_4<b_4,
\\
d=(a_1+a_2+a_3+a_4)-(b_1+b_2+b_3+b_4)\ge0,
\end{gathered}
\label{cond1}
\end{equation}
define the rational function
\begin{align}
R(t)
&=R(\ba,\bb;t)
=\frac{(t+b_1)\dotsb(t+a_1-1)}{(a_1-b_1)!}
\cdot\frac{(t+b_2)\dotsb(t+a_2-1)}{(a_2-b_2)!}
\nonumber\\ &\phantom{=R(\ba,\bb;t)} \qquad\times
\frac{(t+b_3)\dotsb(t+a_3-1)}{(a_3-b_3)!}
\cdot\frac{(b_4-a_4-1)!}{(t+a_4)\dotsb(t+b_4-1)}
\label{eq:gc}
\\
&=\Pi(\ba,\bb)\cdot\frac{\Gamma(t+a_1)\,\Gamma(t+a_2)\,\Gamma(t+a_3)\,\Gamma(t+a_4)}
{\Gamma(t+b_1)\,\Gamma(t+b_2)\,\Gamma(t+b_3)\,\Gamma(t+b_4)},
\label{eq:P0}
\end{align}
where
$$
\Pi(\ba,\bb)
=\frac{(b_4-a_4-1)!}{(a_1-b_1)!\,(a_2-b_2)!\,(a_3-b_3)!}.
$$
We also introduce the ordered versions $a_1^*\le a_2^*\le a_3^*\le a_4^*$ of the parameters $a_1,a_2,a_3,a_4$
and $b_1^*\le b_2^*\le b_3^*$ of $b_1,b_2,b_3$, so that $\{a_1^*,a_2^*,a_3^*,a_4^*\}$ coincide with $\{a_1,a_2,a_3,a_4\}$
and $\{b_1^*,b_2^*,b_3^*\}$ coincide with $\{b_1,b_2,b_3\}$ as multi-sets.
Then $R(t)$ has poles at $t=-k$ where $k=a_4^*,a_4^*+1,\dots,b_4-1$,
zeroes at $t=-\ell$ where $\ell=b_1^*,b_1^*+1,\dots,a_3^*-1$, and double zeroes
at $t=-\ell$ where $\ell=b_2^*,b_2^*+1,\dots,a_2^*-1$.

Decomposing \eqref{eq:gc} into the sum of partial fractions, we get
\begin{equation}
R(t)=\sum_{k=a_4^*}^{b_4-1}\frac{C_k}{t+k}+P(t),
\label{eq:P1}
\end{equation}
where $P(t)$ is a polynomial of degree $d$ (see~\eqref{cond1}) and
\begin{align}
C_k
&=\bigl(R(t)(t+k)\bigr)|_{t=-k}
\nonumber\\
&=(-1)^{d+b_4+k}\binom{k-b_1}{k-a_1}\binom{k-b_2}{k-a_2}\binom{k-b_3}{k-a_3}\binom{b_4-a_4-1}{k-a_4}\in\mathbb Z
\label{eq:P2}
\end{align}
for $k=a_4^*,a_4^*+1,\dots,b_4-1$.

\begin{lemma}
\label{lem:ap}
Set $c=\max\{a_1-b_1,a_2-b_2,a_3-b_3\}$. Then
$D_cP(t)$ is an integer-valued polynomial of degree $d$.
\end{lemma}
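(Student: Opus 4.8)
The plan is to realise the polynomial part $P(t)$ explicitly as a $\mathbb Z$-linear combination of polynomials for which Lemma~\ref{iv2} directly supplies the required integrality. Split off the denominator of $R$ by writing $R(t)=G(t)\,R_4(t)$, where $G(t)=R(a_1,b_1;t)\,R(a_2,b_2;t)\,R(a_3,b_3;t)$ and $R_4(t)=(b_4-a_4-1)!\big/\bigl((t+a_4)(t+a_4+1)\dotsb(t+b_4-1)\bigr)$. Then $G$ is a product of (at most three) integer-valued polynomials of exactly the shape occurring in Lemma~\ref{iv2}, and the parameter $m$ attached to this product equals our $c=\max\{a_1-b_1,a_2-b_2,a_3-b_3\}$ (a factor with $a_i=b_i$ is identically $1$ and may be suppressed; at least one $a_i-b_i\ge1$ because $d\ge0$ forces $\sum_i(a_i-b_i)\ge b_4-a_4\ge1$). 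Since the denominator of $R_4$ has degree $b_4-a_4$, strictly larger than the degree of its numerator, $R_4$ has no polynomial part, and its partial-fraction expansion is $R_4(t)=\sum_{k=a_4}^{b_4-1}c_k/(t+k)$ with $c_k=\bigl((t+k)R_4(t)\bigr)\big|_{t=-k}=(-1)^{k-a_4}\binom{b_4-a_4-1}{k-a_4}\in\mathbb Z$.

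Next I would substitute this into $R(t)=G(t)R_4(t)=\sum_k c_k\,G(t)/(t+k)$ and perform, for each $k$, the Euclidean division $G(t)/(t+k)=G_k(t)+G(-k)/(t+k)$, where $G_k(t)=\bigl(G(t)-G(-k)\bigr)/(t+k)$ is a polynomial of degree $(a_1-b_1)+(a_2-b_2)+(a_3-b_3)-1$. Summing gives $R(t)=\sum_{k=a_4}^{b_4-1}c_k\,G_k(t)+\sum_{k=a_4}^{b_4-1}c_k\,G(-k)/(t+k)$, in which the first sum is a polynomial and the second is a proper rational function vanishing at infinity; by uniqueness of such a decomposition and comparison with \eqref{eq:P1} this identifies $P(t)=\sum_{k=a_4}^{b_4-1}c_k\,G_k(t)$ (and, incidentally, recovers $C_k=c_k\,G(-k)$, consistently with \eqref{eq:P2}).

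It then remains only to check that $D_c\,G_k(t)$ is integer-valued for each $k$, and this is precisely what the third inclusion of Lemma~\ref{iv2} delivers: applied to the product $G$ with the pair of integers $n$ and $-k$, it yields $D_c\,G_k(n)=D_c\cdot\bigl(G(n)-G(-k)\bigr)/(n+k)\in\mathbb Z$ for every integer $n\ne-k$; evaluating at $\deg G_k+1$ consecutive integers that avoid $-k$ and invoking \cite[Division~8, Problem~87]{PS} shows $D_c G_k$ is an integer-valued polynomial. Hence $D_c\,P(t)=\sum_{k=a_4}^{b_4-1}c_k\cdot D_c\,G_k(t)$ is a $\mathbb Z$-linear combination of integer-valued polynomials, so it is integer-valued, and its degree is $d$ as already noted after \eqref{eq:P1}. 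I expect the main (and essentially the only) point needing care to be the bookkeeping that the parameter of Lemma~\ref{iv2} for the three-fold product $G$ is exactly $c$; once the splitting $R=G\cdot R_4$ and the integer residues $c_k$ are in place, everything else is formal.
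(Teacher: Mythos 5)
Your proof is correct, and it follows the paper's strategy in its essentials: the same splitting $R=R_1R_2$ (your $G\cdot R_4$), the same integer residues $(-1)^{k-a_4}\binom{b_4-a_4-1}{k-a_4}$, and the same key input, namely the inclusions of Lemma~\ref{iv2} with $m=c$. The difference lies only in how the integrality of $P$ is extracted. You identify $P$ explicitly as $\sum_{k}c_k\bigl(G(t)-G(-k)\bigr)/(t+k)$ (by uniqueness of the decomposition into polynomial plus proper rational part, which also recovers $C_k=c_kG(-k)$ consistently with \eqref{eq:P2}), and then make each summand integer-valued after multiplication by $D_c$ using only the difference-quotient inclusion together with the consecutive-integer criterion of \cite[Division~8, Problem~87]{PS}. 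The paper instead evaluates $P(-\ell)$ at every integer $\ell$ as $\frac{\d}{\d t}\bigl(P(t)(t+\ell)\bigr)\big|_{t=-\ell}$, which uses both inclusions \eqref{eq:P3}: the derivative inclusion handles the diagonal term $k=\ell$ and the difference-quotient inclusion the remaining terms. Your route therefore bypasses the derivative inclusion altogether (in fact it re-derives it, since $G_k(-k)=G'(-k)$) at the cost of one extra appeal to the P\'olya--Szeg\H{o} criterion, which the paper invokes only inside Lemma~\ref{iv1}; your bookkeeping that the parameter of Lemma~\ref{iv2} for $G$ is exactly $c$ (suppressing factors with $a_i=b_i$, and noting $c\ge1$ because $d\ge0$ and $a_4<b_4$) is sound, as is the identification of $\deg P=d$ from \eqref{eq:P1}.
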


\begin{proof}
Write $R(t)=R_1(t)R_2(t)$, where
$$
R_1(t)=\frac{\prod_{j=b_1}^{a_1-1}(t+j)}{(a_1-b_1)!}
\cdot\frac{\prod_{j=b_2}^{a_2-1}(t+j)}{(a_2-b_2)!}
\cdot\frac{\prod_{j=b_3}^{a_3-1}(t+j)}{(a_3-b_3)!}
$$
is the product of three integer-valued polynomials and
$$
R_2(t)=\frac{(b_4-a_4-1)!}{\prod_{j=a_4}^{b_4-1}(t+j)}
=\sum_{k=a_4}^{b_4-1}\frac{(-1)^{k-a_4}\binom{b_4-a_4-1}{k-a_4}}{t+k}.
$$

It follows from Lemma~\ref{iv2} that
\begin{equation}
\begin{gathered}
D_c\cdot\frac{\d R_1(t)}{\d t}\bigg|_{t=j}\in\mathbb Z
\quad\text{for}\; j\in\mathbb Z
\quad\text{and}
\\
D_c\cdot\frac{R_1(j)-R_1(m)}{j-m}\in\mathbb Z
\quad\text{for}\; j,m\in\mathbb Z, \; j\ne m.
\end{gathered}
\label{eq:P3}
\end{equation}

Furthermore, note that
\begin{align*}
C_k
&=R_1(-k)\cdot\bigl(R_2(t)(t+k)\bigr)\big|_{t=-k}
\\
&=R_1(-k)\cdot(-1)^{k-a_4}\binom{b_4-a_4-1}{k-a_4}
\quad\text{for}\; k\in\mathbb Z,
\end{align*}
and the expression in fact vanishes if $k$ is outside the range $a_4^*\le k\le b_4-1$.

For $\ell\in\mathbb Z$ we have
\begin{align*}
&
\frac{\d}{\d t}\bigl(R(t)(t+\ell)\bigr)\bigg|_{t=-\ell}
=\frac{\d}{\d t}\bigl(R_1(t)\cdot R_2(t)(t+\ell)\bigr)\bigg|_{t=-\ell}
\\ &\quad
=\frac{\d R_1(t)}{\d t}\bigg|_{t=-\ell}
\cdot\bigl(R_2(t)(t+\ell)\bigr)\big|_{t=-\ell}
+R_1(-\ell)\cdot\frac{\d}{\d t}\bigr(R_2(t)(t+\ell)\bigr)\bigg|_{t=-\ell}
\displaybreak[2]\\ &\quad
=\frac{\d R_1(t)}{\d t}\bigg|_{t=-\ell}\cdot(-1)^{\ell-a_4}\binom{b_4-a_4-1}{\ell-a_4}
\\ &\quad\qquad
+R_1(-\ell)\cdot\frac{\d}{\d t}\sum_{k=a_4}^{b_4-1}(-1)^{k-a_4}\binom{b_4-a_4-1}{k-a_4}
\biggl(1-\frac{-\ell+k}{t+k}\biggr)\bigg|_{t=-\ell}
\displaybreak[2]\\ &\quad
=\frac{\d R_1(t)}{\d t}\bigg|_{t=-\ell}\cdot(-1)^{\ell-a_4}\binom{b_4-a_4-1}{\ell-a_4}
+R_1(-\ell)\sum_{\substack{k=a_4\\k\ne\ell}}^{b_4-1}\frac{(-1)^{k-a_4}\binom{b_4-a_4-1}{k-a_4}}{-\ell+k}
\end{align*}
and
\begin{align*}
\frac{\d}{\d t}\biggl(\sum_{k=a_4^*}^{b_4-1}\frac{C_k}{t+k}\cdot(t+\ell)\biggr)\bigg|_{t=-\ell}
&=\frac{\d}{\d t}\biggl(\sum_{k=a_4}^{b_4-1}\frac{C_k}{t+k}\cdot(t+\ell)\biggr)\bigg|_{t=-\ell}
\\
&=\frac{\d}{\d t}\sum_{k=a_4}^{b_4-1}C_k\biggl(1-\frac{-\ell+k}{t+k}\biggr)\bigg|_{t=-\ell}
=\sum_{\substack{k=a_4\\k\ne\ell}}^{b_4-1}\frac{C_k}{-\ell+k}
\\
&=\sum_{\substack{k=a_4\\k\ne\ell}}^{b_4-1}\frac{R_1(-k)\cdot(-1)^{k-a_4}\binom{b_4-a_4-1}{k-a_4}}{-\ell+k}.
\end{align*}
Therefore,
\begin{align*}
P(-\ell)
&=\frac{\d}{\d t}\bigl(P(t)(t+\ell)\bigr)\big|_{t=-\ell}
=\frac{\d}{\d t}\biggl(R(t)(t+\ell)
-\sum_{k=a_4^*}^{b_4-1}\frac{C_k}{t+k}\cdot(t+\ell)\biggr)\bigg|_{t=-\ell}
\\
&=\frac{\d R_1(t)}{\d t}\bigg|_{t=-\ell}\cdot(-1)^{\ell-a_4}\binom{b_4-a_4-1}{\ell-a_4}
\\ &\qquad
+\sum_{\substack{k=a_4\\k\ne\ell}}^{b_4-1}(-1)^{k-a_4}\binom{b_4-a_4-1}{k-a_4}\frac{R_1(-\ell)-R_1(-k)}{-\ell+k},
\end{align*}
and this implies, on the basis of the inclusions \eqref{eq:P3} above, that
$D_cP(-\ell)\in\mathbb Z$ for all $\ell\in\mathbb Z$.
\end{proof}

Finally, define the quantity
\begin{equation}
r(\ba,\bb)
=\frac{(-1)^d}{2\pi i}\int_{C-i\infty}^{C+i\infty}\biggl(\frac\pi{\sin\pi t}\biggr)^2R(\ba,\bb;t)\,\d t,
\label{eq:P4}
\end{equation}
where $C$ is arbitrary from the interval $-a_2^*<C<1-b_2^*$. The definition does not depend on the choice of $C$,
as the integrand does not have singularities in the strip $-a_2^*<\Re t<1-b_2^*$.

\begin{proposition}
\label{prop1}
We have
\begin{equation}
r(\ba,\bb)=q(\ba,\bb)\zeta(2)-p(\ba,\bb),
\qquad\text{with}\quad
q(\ba,\bb)\in\mathbb Z, \quad D_{c_1}D_{c_2}p(\ba,\bb)\in\mathbb Z,
\label{eq:P5}
\end{equation}
where
\begin{equation*}
c_1=\max\{a_1-b_1,a_2-b_2,a_3-b_3,b_4-a_2^*-1\}
\quad\text{and}\quad
c_2=\max\{d+1,b_4-a_2^*-1\}.
\end{equation*}
Furthermore,
the quantity $r(\ba,\bb)/\Pi(\ba,\bb)$ is invariant under any permutation
of the parameters $a_1,a_2,a_3,a_4$.
\end{proposition}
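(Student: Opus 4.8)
The plan is to split the proposition into three parts: (i) the representation $r(\ba,\bb)=q(\ba,\bb)\zeta(2)-p(\ba,\bb)$ with the stated arithmetic of $q$ and $p$; and (ii) the permutation invariance of $r(\ba,\bb)/\Pi(\ba,\bb)$. For (i) I would start from the partial-fraction decomposition \eqref{eq:P1}, $R(t)=\sum_{k}C_k/(t+k)+P(t)$, and substitute it into the integral \eqref{eq:P4}. The polynomial term contributes $\frac{(-1)^d}{2\pi i}\int(\pi/\sin\pi t)^2P(t)\,\d t$; writing $P(t)$ in the binomial basis $\binom{t+\ell-1}{\ell-1}$ (the integer-valued basis, with integer coefficients since $D_{c_2}P$ is integer-valued by Lemma~\ref{lem:ap}, noting $\deg P=d\le c_2$) and invoking Lemma~\ref{barnes} after the shift $t\mapsto t-\ell$, this term evaluates to a rational number whose denominator divides $D_{c_2}$. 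The pole terms contribute $\frac{(-1)^d}{2\pi i}\sum_k C_k\int(\pi/\sin\pi t)^2\,\d t/(t+k)$; by Lemma~\ref{nest} each such integral equals $\zeta(2)-\sum_{\ell=1}^{k}\ell^{-2}$ (for the appropriate shifted contour — one must check the contour $\Re t=C$ can be moved to $\Re t=1/2$ without crossing poles of the integrand, which holds for those $k$ in the summation range). Collecting terms, $q(\ba,\bb)=(-1)^d\sum_k C_k\in\mathbb Z$ since each $C_k\in\mathbb Z$ by \eqref{eq:P2}, and $p(\ba,\bb)$ is the sum of $(-1)^d\sum_k C_k\sum_{\ell\le k}\ell^{-2}$ (denominator dividing $D_{c_1}^2$, since $k\le b_4-1$ and the relevant bound on $k$ in the nonvanishing range is governed by $c_1$, actually $k\le b_4-1$ so one needs $D_{b_4-a_2^*-1}$ type control — this is where the precise definition of $c_1$ enters) and the rational contribution of the $P$-term (denominator dividing $D_{c_2}$). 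Hence $D_{c_1}D_{c_2}\,p(\ba,\bb)\in\mathbb Z$.

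For part (ii), I would use the hypergeometric representation \eqref{eq:P0}: since $R(\ba,\bb;t)=\Pi(\ba,\bb)\prod_j\Gamma(t+a_j)/\prod_j\Gamma(t+b_j)$, the ratio $R(\ba,\bb;t)/\Pi(\ba,\bb)$ is manifestly symmetric in $a_1,a_2,a_3,a_4$ as a function of $t$. The integrand $(\pi/\sin\pi t)^2\cdot R(\ba,\bb;t)/\Pi(\ba,\bb)$ is therefore symmetric in the $a_j$'s as well. However, the contour in \eqref{eq:P4} depends on $a_2^*$, i.e.\ on the second-smallest of the $a_j$; so I must argue that the value of the integral is unchanged when the $a_j$ are permuted, even though the prescribed contour location changes. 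This follows because, as noted right after \eqref{eq:P4}, the integral is independent of $C$ within the vertical strip free of singularities; permuting the $a_j$ permutes the poles of $R$ but does not change the set of poles of the integrand, hence does not change the integral's value (one can always move $C$ within a common admissible strip, or analytically continue). Thus $r(\ba,\bb)/\Pi(\ba,\bb)$ depends on $\{a_1,a_2,a_3,a_4\}$ only as a multi-set.

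The main obstacle I anticipate is the careful bookkeeping in part (i): tracking exactly which denominators $D_m$ arise from each piece and verifying that the two maxima $c_1$ and $c_2$ in the statement are the correct (sharp enough) bounds. In particular, the term $D_{b_4-a_2^*-1}$ appearing in both $c_1$ and $c_2$ is subtle — it must come from the fact that when the contour is at $\Re t=C$ with $-a_2^*<C<1-b_2^*$, moving it to $\Re t=1/2$ to apply Lemmas \ref{barnes} and \ref{nest} crosses some poles $t=-k$, producing residues $\pi\cot'$-type terms; alternatively, one re-derives the $C_k$-sum contribution directly on the shifted contour, in which case only those $k$ with $a_4^*\le k\le b_4-1$ that lie on the relevant side contribute partial sums $\sum_{\ell=1}^{k}\ell^{-2}$ of length up to $b_4-1$, whence $D_{b_4-1}$ — but the sharper bound $D_{b_4-a_2^*-1}$ requires exploiting that $C_k$ already contains a binomial factor $\binom{b_4-a_4-1}{k-a_4}$ killing small $k$, combined with Lemma~\ref{iv1}-type telescoping of $\sum\ell^{-2}$ against the structure of $C_k$. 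I would handle this by following the argument of Lemma~\ref{lem:ap} closely, grouping $C_k/(k-\ell)$ differences to expose integer-valued polynomial combinations. The rest — Lemmas \ref{barnes}, \ref{nest} applications and the symmetry argument — is routine.
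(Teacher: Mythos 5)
Your part (ii) is fine (and in fact easier than you make it: $a_2^*$ and $b_2^*$ are defined from the multi-sets, so the admissible strip in \eqref{eq:P4} is itself permutation-invariant; the claim follows at once from \eqref{eq:P0}). The gap is in part (i), precisely at the point you flag as the ``main obstacle''. The device the paper uses, and which your proposal lacks, is a single coordinated shift: one chooses $C=1/2-a_2^*$ in \eqref{eq:P4}, writes the polynomial part of \eqref{eq:P1} in the \emph{shifted} integer-valued basis, $P(t)=\sum_{\ell=0}^d A_\ell P_\ell(t+a_2^*)$ with $P_\ell(t)=(t-1)\dotsb(t-\ell)/\ell!$ and $D_cA_\ell\in\mathbb Z$ by Lemma~\ref{lem:ap}, and then substitutes $t\mapsto t-a_2^*$ in the whole integral, using the $1$-periodicity of $(\pi/\sin\pi t)^2$. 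After this substitution the contour is exactly $\Re t=1/2$, Lemma~\ref{barnes} applies verbatim to each $P_\ell(t)$, and Lemma~\ref{nest} applies to each term $C_k/(t+(k-a_2^*))$ with the \emph{nonnegative} index $k-a_2^*\le b_4-a_2^*-1$. This is the entire source of the quantity $b_4-a_2^*-1$ in $c_1$ and $c_2$: the partial sums produced by Lemma~\ref{nest} are $\sum_{\ell=1}^{k-a_2^*}\ell^{-2}$, of length at most $b_4-a_2^*-1$, so their denominators divide $D_{b_4-a_2^*-1}^2$, while the polynomial part contributes $\sum_\ell(-1)^{d+\ell}A_\ell/(\ell+1)$ with denominator dividing $D_cD_{d+1}$; together this gives $D_{c_1}D_{c_2}p(\ba,\bb)\in\mathbb Z$ and $q(\ba,\bb)=(-1)^d\sum_kC_k\in\mathbb Z$.

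Neither of the two mechanisms you propose instead would deliver this. Term-by-term application of Lemma~\ref{nest} ``after checking the contour can be moved to $\Re t=1/2$ without crossing poles'' fails: the individual integrands $(\pi/\sin\pi t)^2/(t+k)$ have double poles at \emph{every} integer, in particular at $t=0,-1,\dots,-(a_2^*-1)$, which lie between the admissible strip $-a_2^*<\Re t<1-b_2^*$ and the line $\Re t=1/2$; the value of such an integral genuinely depends on which strip the contour sits in, so using the value $\zeta(2)-\sum_{\ell\le k}\ell^{-2}$ (or shifting each term separately, as you also do for the polynomial part via $t\mapsto t-\ell$) silently changes the quantity being computed unless one accounts for the crossed residues. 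And your fallback idea --- that the binomial factor $\binom{b_4-a_4-1}{k-a_4}$ inside $C_k$ plus some Lemma~\ref{iv1}-type telescoping upgrades $D_{b_4-1}$ to $D_{b_4-a_2^*-1}$ --- is not the actual mechanism and there is no such cancellation in the paper: the improvement comes purely from the shift by $a_2^*$, not from arithmetic of the $C_k$. A smaller inaccuracy of the same kind: Lemma~\ref{lem:ap} gives integrality of $D_cP$ with $c=\max\{a_1-b_1,a_2-b_2,a_3-b_3\}$ (a part of $c_1$, not of $c_2$), so the polynomial term needs both a $D_{c_1}$-factor (for the coefficients $A_\ell$) and a $D_{c_2}$-factor (for the $1/(\ell+1)$ coming from Lemma~\ref{barnes}), not just $D_{c_2}$ as you assert. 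With the shifted-basis-plus-substitution step put in, the rest of your outline does match the paper's argument.
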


\begin{proof}
We choose $C=1/2-a_2^*$ in~\eqref{eq:P4} and write \eqref{eq:P1} as
\begin{equation*}
R(t)=\sum_{k=a_4^*}^{b_4-1}\frac{C_k}{t+k}+\sum_{\ell=0}^dA_\ell P_\ell(t+a_2^*),
\end{equation*}
where
\begin{equation*}
P_\ell(t)=\frac{(t-1)(t-2)\dotsb(t-\ell)}{\ell!}
\end{equation*}
and $D_cA_\ell\in\mathbb Z$ in accordance with Lemma~\ref{lem:ap}.
Applying Lemmas~\ref{barnes} and \ref{nest} we obtain
\begin{align*}
r(\ba,\bb)
&=\frac{(-1)^d}{2\pi i}\int_{1/2-i\infty}^{1/2+i\infty}\biggl(\frac\pi{\sin\pi t}\biggr)^2R(t-a_2^*)\,\d t
\\
&=\zeta(2)\cdot(-1)^d\sum_{k=a_4^*}^{b_4-1}C_k
-(-1)^d\sum_{k=a_4^*}^{b_4-1}C_k\sum_{\ell=1}^{k-a_2^*}\frac1{\ell^2}
+\sum_{\ell=0}^d\frac{(-1)^{d+\ell}A_\ell}{\ell+1}.
\end{align*}
This representation clearly implies that $r(\ba,\bb)$ has the desired form~\eqref{eq:P5},
while the invariance of $r(\ba,\bb)/\Pi(\ba,\bb)$ under permutations
of $a_1,a_2,a_3,a_4$ follows from~\eqref{eq:P0} and definition~\eqref{eq:P4} of $r(\ba,\bb)$.
\end{proof}

\section{Towards proving Theorem~\ref{main}}
\label{app}

For the particular case
\begin{equation}
\begin{alignedat}{4}
a_1&=7n+1, &\quad a_2&=6n+1, &\quad a_3&=5n+1, &\quad a_4&=\phantom08n+1,
\\
b_1&=1, &\quad b_2&=\phantom1n+1, &\quad b_3&=2n+1, &\quad b_4&=14n+2,
\end{alignedat}
\label{P-ex}
\end{equation}
from Proposition~\ref{prop1} we obtain
\begin{equation}
r_n=r(\ba,\bb)=q_n\zeta(2)-p_n,
\qquad\text{where}\quad
q_n, \, D_{9n}D_{8n}p_n\in\mathbb Z.
\label{P-fin}
\end{equation}

The asymptotic behaviour of $r_n$ and $q_n$ for a generic choice
\begin{equation}
\begin{alignedat}{4}
a_1&=\alpha_1n+1, &\quad a_2&=\alpha_2n+1, &\quad a_3&=\alpha_3n+1, &\quad a_4&=\alpha_4n+1,
\\
b_1&=\beta_1n+1, &\quad b_2&=\beta_2n+1, &\quad b_3&=\beta_3n+1, &\quad b_4&=\beta_4n+2,
\end{alignedat}
\label{P-gen}
\end{equation}
where the integral parameters $\alpha_j$ and $\beta_j$ satisfy
$$
\beta_1,\beta_2,\beta_3<\alpha_1,\alpha_2,\alpha_3,\alpha_4<\beta_4,
\quad
\alpha_1+\alpha_2+\alpha_3+\alpha_4>\beta_1+\beta_2+\beta_3+\beta_4
$$
(to ensure the earlier imposed conditions \eqref{cond1}),
is pretty standard.

\begin{lemma}
\label{lem:Pan}
Assume that the cubic polynomial $\prod_{j=1}^4(\tau-\alpha_j)-\prod_{j=1}^4(\tau-\beta_j)$
has one real zero $\tau_1$ and two complex conjugate zeroes $\tau_0$ and $\ol{\tau_0}$.
Then
$$
\limsup_{n\to\infty}\frac{\log|r_n|}n=\Re f_0(\tau_0)
\quad\text{and}\quad
\lim_{n\to\infty}\frac{\log|q_n|}n=f_0(\tau_1),
$$
where
\begin{align*}
f_0(\tau)
&=\sum_{j=1}^4\bigl(\alpha_j\log(\tau-\alpha_j)-\beta_j\log(\tau-\beta_j)\bigr)
\\[-4.5pt] &\qquad
-\sum_{j=1}^3(\alpha_j-\beta_j)\log(\alpha_j-\beta_j)+(\beta_4-\alpha_4)\log(\beta_4-\alpha_4).
\end{align*}
\end{lemma}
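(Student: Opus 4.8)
The plan is to extract the asymptotics of $r_n$ and $q_n$ directly from the explicit representation of $r(\ba,\bb)$ obtained in the proof of Proposition~\ref{prop1}, namely
\begin{equation*}
r_n=\zeta(2)\cdot(-1)^d\sum_{k}C_k-(-1)^d\sum_{k}C_k\sum_{\ell=1}^{k-a_2^*}\frac1{\ell^2}+\sum_{\ell=0}^d\frac{(-1)^{d+\ell}A_\ell}{\ell+1},
\end{equation*}
so that $q_n=(-1)^d\sum_k C_k$ up to sign, while $r_n$ itself is best analysed through its integral form \eqref{eq:P4}. First I would treat $q_n$: from \eqref{eq:P2} the coefficient $C_k$ is an alternating product of four binomial coefficients, and with the scaling \eqref{P-gen}, $k=\tau n+O(1)$, Stirling's formula gives $\tfrac1n\log|C_k|\to g(\tau)$ for an explicit function $g$, where $g$ is precisely the "real on the real axis" version of $f_0$. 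Since $q_n$ is (up to sign) a sum of $O(n)$ such terms, $\tfrac1n\log|q_n|\to\max_\tau g(\tau)$ over the relevant real interval $a_4^*\le k\le b_4-1$; a standard computation identifies the maximiser as the real critical point of $g$, which is exactly the real root $\tau_1$ of $\prod(\tau-\alpha_j)-\prod(\tau-\beta_j)$, giving $\lim\tfrac1n\log|q_n|=f_0(\tau_1)$. One must check that no cancellation spoils the lower bound; this follows because $C_k$ does not change sign on a neighbourhood of the maximiser (the sign $(-1)^{d+b_4+k}$ alternates, but $|C_k|$ is log-concave near its max so the even-$k$ or odd-$k$ subsum already attains the full exponential rate), or alternatively one appeals to the well-poisedness/positivity inherent in the construction as in \cite{Ne96,Zu02}.

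For $r_n$ I would use the saddle-point method on \eqref{eq:P4}. Writing $R(\ba,\bb;t)$ via \eqref{eq:P0} with $t=\tau n$ and applying Stirling to every $\Gamma$-factor, together with $\bigl(\pi/\sin\pi t\bigr)^2$ contributing only a subexponential factor on the vertical line (away from integers), one gets
\begin{equation*}
\frac1n\log\biggl[\biggl(\frac\pi{\sin\pi t}\biggr)^2R(\ba,\bb;\tau n)\biggr]\longrightarrow f_0(\tau)
\end{equation*}
uniformly on compact subsets of $\Re\tau\in(-a_2^*/n,\dots)$ — more precisely on the fixed strip between the appropriate poles. The contour of integration $\Re t=Cn$ can then be shifted to pass through a saddle point of $f_0$, i.e. a zero of $f_0'(\tau)=\log\prod\frac{\tau-\alpha_j}{\tau-\beta_j}$, which after exponentiating is a root of the cubic $\prod(\tau-\alpha_j)-\prod(\tau-\beta_j)$. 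The hypothesis selects the complex-conjugate pair $\tau_0,\ol{\tau_0}$; deforming the vertical line into a steepest-descent path through $\tau_0$ and $\ol{\tau_0}$, the two saddles contribute complex-conjugate amounts, and the standard saddle-point estimate yields $\limsup\tfrac1n\log|r_n|=\Re f_0(\tau_0)$. The $\limsup$ (rather than $\lim$) is genuinely needed because the $(\pi/\sin\pi t)^2$ factor oscillates and may cause the subexponential prefactor to vanish along a subsequence of~$n$; this is the familiar phenomenon in such constructions and is handled exactly as in \cite[Lemma~2.4]{Zu02} and the references in Remark~\ref{rem1}.

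The main obstacle is the contour deformation and the verification that the steepest-descent path through $\tau_0,\ol{\tau_0}$ stays inside the analyticity strip of the integrand and separates the poles of $R$ from the relevant part of the $t$-plane correctly — equivalently, that the chosen saddle is the dominant one among all critical points and that no pole is crossed when moving the line $\Re t=Cn$ to the saddle's abscissa. This requires a careful look at the geometry of the level curves of $\Re f_0$, using the sign conditions \eqref{cond1} (respectively the $\alpha_j,\beta_j$ inequalities) to locate the branch cuts of the logarithms in $f_0$ and to confirm that the real root $\tau_1$ lies outside the strip $-a_2^*<\Re t<1-b_2^*$ while the complex saddles are accessible. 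Once the topology of the descent path is pinned down, everything else is routine Stirling bookkeeping; I would also remark that the specific numerology \eqref{P-ex} will later be plugged into $f_0$ to produce the numbers in Theorem~\ref{main}, but that is deferred to Section~\ref{epi}.
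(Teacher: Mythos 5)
Your sketch heads in the same general direction as the analyses the paper points to (it cites \cite{Zu02,Zu03,Zu04} and offers the recurrence/Poincar\'e alternative of Remark~\ref{rem4} instead of giving details), but as written it has a genuine gap in the treatment of $q_n$. By \eqref{eq:P2} the coefficients satisfy $C_k=(-1)^{d+b_4+k}|C_k|$, so $q_n=(-1)^d\sum_kC_k$ is a strictly alternating sum, and your argument that cancellation cannot lower the exponential rate (``the even-$k$ or odd-$k$ subsum already attains the full rate'') is a non sequitur: a lower bound for one parity class says nothing about the difference of the two classes. In fact the cancellation here is exponentially strong. For the choice \eqref{P-ex} the summation range is $k/n\in[8,14]$; the termwise exponent $g(\tau)=\lim_n n^{-1}\log|C_{\lfloor\tau n\rfloor}|$ has its critical point where $\prod_j(\tau-\alpha_j)+\prod_j(\tau-\beta_j)=0$ (note the \emph{plus} sign --- this is not the cubic of the lemma), namely at $\tau\approx13.0$, with $\max g\approx26.3$; yet $C_1=23.229\dots$ by \eqref{eq:Pan}, and the real root $\tau_1\approx16.8$ of the lemma's cubic does not even lie in $[8,14]$, so it cannot be ``the maximiser'' you invoke. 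Hence the value $f_0(\tau_1)$ cannot be reached by a no-cancellation Laplace argument on $\sum_kC_k$; one needs either a contour-integral representation of $q_n$ itself whose saddle equation is genuinely $\prod_j(\tau-\alpha_j)=\prod_j(\tau-\beta_j)$, or the fixed-sign representation $\hat q$ of Section~\ref{s1} obtained from Whipple's transformation, or --- the paper's suggested route --- Poincar\'e's theorem applied to the common recurrence of Section~\ref{sbailey}, using that $r_n\to0$ exponentially while $q_n$ is a nonzero integer sequence.

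The $r_n$ part of the sketch also rests on a false simplification: $(\pi/\sin\pi t)^2$ is subexponential only for bounded $|\Im t|$, whereas at the putative saddle $t\approx\tau_0n$ with $\Im\tau_0\neq0$ it contributes a factor of size $e^{-2\pi|\Im t|}$, i.e.\ a full exponential in $n$; it must be absorbed into the phase (via $(\pi/\sin\pi t)^2\sim-4\pi^2e^{\pm2\pi it}$ in the upper/lower half-plane) before any steepest-descent deformation, and it is exactly this bookkeeping that selects the complex pair $\tau_0,\ol{\tau_0}$ and produces the $\limsup$. Relatedly, $f_0$ is not the phase function: $f_0'(\tau)\neq\log\prod_j\frac{\tau-\alpha_j}{\tau-\beta_j}$. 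The Stirling phase is (up to sign and the normalisation $t=\pm\tau n$) of the shape $\sum_j\bigl((\tau-\alpha_j)\log(\tau-\alpha_j)-(\tau-\beta_j)\log(\tau-\beta_j)\bigr)+\mathrm{const}$, whose critical-point equation is the lemma's cubic, and $f_0$ is merely its critical value simplified by means of that equation. So the steps you call routine are precisely the non-routine ones; if you do not want to redo the saddle-point analysis of \cite{Zu02,Zu03,Zu04} in detail, the cleanest complete proof is the recurrence/Poincar\'e argument indicated in the paper.
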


For a proof of the statement we refer to similar considerations in \cite{Zu02,Zu03,Zu04}.
An alternative proof can be given, based on Poincar\'e's theorem and on explicit recurrence
relations satisfied by both $r_n$ and $q_n$\,---\,we touch the latter aspect for our concrete
choice~\eqref{P-ex} in Section~\ref{sbailey}.

When the parameters are chosen in accordance with~\eqref{P-ex}, we obtain
\begin{equation}
\begin{aligned}
-\limsup_{n\to\infty}\frac{\log|r_n|}n=C_0&=15.88518998\dots,
\\ 
\lim_{n\to\infty}\frac{\log|q_n|}n=C_1&=23.22906071\dotsc.
\end{aligned}
\label{eq:Pan}
\end{equation}

\medskip
For a generic choice \eqref{P-gen}, the quantities $c_1$ and $c_2$ in Proposition~\ref{prop1}
assume the form $\gamma_1n$ and $\gamma_2n$, where the integers $\gamma_1$ and $\gamma_2$
only depend on the data $\alpha_j,\beta_j$ for $j=1,\dots,4$;
for simplicity we order them: $\gamma_1\ge\gamma_2$. Additionally set
$\gamma_0=\max_{1\le j,k\le4}|\alpha_j-\beta_k|$.
In what follows, $\lf\,\cdot\,\rf$ denotes the integer part of a real number.

\begin{lemma}
\label{lem:Par}
In the above notation, we have
\begin{equation}
\Phi_n^{-1}q_n,\,\Phi_n^{-1}D_{\gamma_1n}D_{\gamma_2n}p_n\in\mathbb Z
\label{acorr}
\end{equation}
with $\Phi_n=\prod_{\sqrt{\gamma_0n}<p\;\text{prime}\,\le\gamma_2n}p^{\varphi(n/p)}$, where
\begin{align*}
\varphi(x)=\max_{\boldsymbol\alpha'=\sigma\boldsymbol\alpha:\sigma\in\mathfrak S_4}
&\biggl(\lf(\beta_4-\alpha_4)x\rf-\lf(\beta_4-\alpha_4')x\rf
\\[-2pt] &\qquad
-\sum_{j=1}^3\bigl(\lf(\alpha_j-\beta_j)x\rf-\lf(\alpha_j'-\beta_j)x\rf\bigr)\biggr),
\end{align*}
the maximum being taken over all permutations $(\alpha_1',\alpha_2',\alpha_3',\alpha_4')$ of
$(\alpha_1,\alpha_2,\alpha_3,\alpha_4)$. Furthermore,
$$
\lim_{n\to\infty}\frac{\log\Phi_n}n
=\int_0^1\varphi(x)\,\d\psi(x)-\int_0^{1/\gamma_2}\varphi(x)\,\frac{\d x}{x^2},
$$
where $\psi(x)$ is the logarithmic derivative of the gamma function.
\end{lemma}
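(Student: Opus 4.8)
The plan is to establish the integrality claim \eqref{acorr} by a local (prime-by-prime) analysis of the quantities $q_n$ and $D_{\gamma_1 n}D_{\gamma_2 n}p_n$, and then to derive the limit formula for $\log\Phi_n/n$ by a standard Chebyshev-type asymptotic argument. Recall from Proposition~\ref{prop1} (via its proof) that $q_n=(-1)^d\sum_k C_k$ and that $p_n$ is, up to the factor $D_{c_1}D_{c_2}$, a $\mathbb Z$-linear combination of the $C_k$ (with $\ell^2$-harmonic coefficients) together with the numbers $A_\ell/(\ell+1)$ coming from the polynomial part. So the entire arithmetic content reduces to controlling, for each prime $p$, the $p$-adic valuations of the binomial-product coefficients $C_k$ displayed in \eqref{eq:P2}, of the values of $R_1$ (via Lemma~\ref{lem:ap} and Lemma~\ref{iv2}), and of the rational numbers $A_\ell/(\ell+1)$. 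I would first treat the large primes $p>\sqrt{\gamma_0 n}$ separately, since for those primes each factor $(t+b_j)\dotsb(t+a_j-1)$ and $(t+a_4)\dotsb(t+b_4-1)$ occurring in $R$ spans an interval of length $<p^2$, so Kummer's theorem / the Legendre formula shows that the $p$-adic valuation of each binomial in \eqref{eq:P2} is governed by a \emph{single} floor-function expression $\lf \cdot/p\rf$ rather than a sum over powers of $p$.

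The heart of the matter is then the following: for a prime $p$ with $\sqrt{\gamma_0 n}<p\le \gamma_2 n$ I want to show that $p^{\varphi(n/p)}$ divides $C_k$ for \emph{every} $k$ in the summation range. Writing $x=n/p$, the valuation $\ord_p C_k$ equals a sum of terms of the shape $\lf u \rf-\lf v\rf$ where $u,v$ run through the relevant linear forms in the $\alpha_j,\beta_j$ evaluated at $x$ (and at the fractional offsets determined by $k/p$); the classical ``Rhin--Viola'' style estimate is that, uniformly in $k$, this is bounded below by the quantity $\varphi(n/p)$ as defined in the statement, the permutation $\sigma$ being the one that sorts the $a_j$ appropriately at the relevant scale. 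This is exactly the place where the invariance of $r(\ba,\bb)/\Pi(\ba,\bb)$ under $\mathfrak S_4$ (established in Proposition~\ref{prop1}) is exploited: it lets us replace $q_n,p_n$ by their counterparts for the permuted parameter tuple $\sigma\ba$ without changing $r_n$ up to the explicit rational factor $\Pi(\sigma\ba)/\Pi(\ba)$, and then take the best permutation. I would also need the parallel statement for the contribution of the $A_\ell/(\ell+1)$ and for the derivative terms $\d R_1/\d t$, but those are handled by the same floor-function bookkeeping together with the inclusions already recorded in Lemma~\ref{lem:ap} and Lemma~\ref{iv2}, plus the observation that the ``extra'' denominator $\ell+1\le d n+1$ is already absorbed by $D_{c_2}$ for primes exceeding $\sqrt{\gamma_0 n}$.

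For the asymptotic formula, once \eqref{acorr} is in place I would write $\log\Phi_n=\sum_{\sqrt{\gamma_0 n}<p\le\gamma_2 n}\varphi(n/p)\log p$ and interpret this as a Stieltjes sum. Splitting the range of summation according to the value of $\lf n/p\rf$ (equivalently, according to which ``piece'' the point $n/p$ lands in), the step function $\varphi$ is piecewise constant with finitely many jumps in $(0,1]$, and prime-counting in arithmetic-free form (the prime number theorem in the shape $\sum_{p\le y}\log p\sim y$, or more conveniently $\sum_{p\le y}\log p = \psi_{\mathrm{Ch}}(y)+o(y)$ together with $\psi_{\mathrm{Ch}}(y)\sim y$) converts $\frac1n\sum_{p\le n/x}\log p$ into $1/x$. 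Summation by parts over the jumps of $\varphi$ then turns $\frac1n\log\Phi_n$ into the difference of the two integrals in the statement: the $\int_0^1\varphi(x)\,\d\psi(x)$ term records the lower cutoff $p>\sqrt{\gamma_0 n}$ (through the digamma/logarithmic-derivative weight, exactly as in \cite{Zu02,Zu04}), while $\int_0^{1/\gamma_2}\varphi(x)\,\d x/x^2$ records the upper cutoff $p\le\gamma_2 n$. The main obstacle is unquestionably the uniform-in-$k$ lower bound $\ord_p C_k\ge\varphi(n/p)$: it requires a careful case analysis of how the floor functions of the various linear forms interact as $k/p$ varies, and it is where the choice of the optimizing permutation and the precise definition of $\varphi$ (with its $\lf\cdot\rf$'s lined up as in Lemma~\ref{lem:Par}) must be matched exactly; everything else is routine once this combinatorial estimate is secured.
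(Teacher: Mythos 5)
Your toolkit (permutation invariance, single-floor valuations for large primes, PNT asymptotics) is the right one, but the way you assemble it has a genuine gap, and it sits exactly where you place the ``heart of the matter''. The paper's argument (the Rhin--Viola correction as adapted in \cite{Zu04}) never needs the coefficient-wise claim $\ord_p C_k\ge\varphi(n/p)$. It works with the whole coefficients: since $r(\ba,\bb)/\Pi(\ba,\bb)$ is invariant under permutations $\sigma$ of $a_1,\dots,a_4$ (Proposition~\ref{prop1}) and $\zeta(2)\notin\mathbb Q$, one has $q(\ba,\bb)=\frac{\Pi(\ba,\bb)}{\Pi(\sigma\ba,\bb)}\,q(\sigma\ba,\bb)$ and the same relation for $p$; applying Proposition~\ref{prop1} to the permuted tuple gives $q(\sigma\ba,\bb)\in\mathbb Z$ and $D_{\gamma_1n}D_{\gamma_2n}p(\sigma\ba,\bb)\in\mathbb Z$, whence $\ord_p q_n\ge\ord_p\bigl(\Pi(\ba,\bb)/\Pi(\sigma\ba,\bb)\bigr)$ for every $\sigma$; and for $p>\sqrt{\gamma_0 n}$ this valuation is exactly the single-floor expression, so maximising over $\sigma$ yields $\varphi(n/p)$\,---\,this is what the formula $\varphi(n/p)=\max_\sigma\ord_p\Pi(\ba,\bb)/\Pi(\sigma\ba,\bb)$ in the paper's proof encodes. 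No analysis of individual $C_k$, of the $A_\ell/(\ell+1)$, or of derivative terms is required.

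By contrast, you rest the proof on the uniform-in-$k$ bound $\ord_p C_k\ge\varphi(n/p)$ with $\varphi$ the max-over-permutations function, and you assert that the $\mathfrak S_4$-invariance is ``exactly'' what delivers it. It is not: the invariance relates the aggregated coefficients of the permuted linear forms and gives no divisibility of an individual partial-fraction coefficient. If you insist on the coefficient-wise route, the floor-function case analysis naturally produces the min-over-$y$ expression of Remark~\ref{rem2}, and you would then have to prove that this minimum coincides with the max-over-permutations $\varphi$ of the statement\,---\,precisely the step the paper flags as ``not at all straightforward''; your ``the permutation that sorts the $a_j$ appropriately'' glosses over it. You would also still owe the extra factor $p^{\varphi(n/p)}$ for the polynomial-part contributions to $p_n$, which Lemma~\ref{lem:ap} alone does not supply. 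Finally, on the asymptotics: the digamma integral does not ``record the lower cutoff $p>\sqrt{\gamma_0 n}$''; it arises from folding $\int_0^\infty\varphi(u)u^{-2}\,\d u$ into one period using the $1$-periodicity of $\varphi$, the lower cutoff being asymptotically negligible, while (as you correctly say) the subtracted integral accounts for the cutoff $p\le\gamma_2 n$.
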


\begin{proof}
The arithmetic `correction' in~\eqref{acorr} uses by now a standard method,
based on the permutation group from Proposition~\ref{prop1};
see the original source~\cite{RV96} or its adaptation to hypergeometric
settings in~\cite{Zu04} for details. The function $\varphi(x)$ is chosen
to count the maximum
$$
\varphi\biggl(\frac np\biggr)
=\max_{\sigma\in\mathfrak S_4}\ord_p\frac{\Pi(\ba,\bb)}{\Pi(\sigma\ba,\bb)}.
\qedhere
$$
\end{proof}

\begin{remark}
\label{rem2}
There is an alternative way to compute $\varphi(x)$ using
\begin{align}
\varphi(x)=\min_{0\le y<1}\biggl(\sum_{j=1}^3
&
\bigl(\lf y-\beta_jx\rf-\lf y-\alpha_jx\rf-\lf(\alpha_j-\beta_j)x\rf\bigr)
\nonumber\\[-6pt] &\qquad
+\lf(\beta_4-\alpha_4)x\rf-\lf\beta_4x-y\rf-\lf y-\alpha_4x\rf\biggr),
\nonumber
\end{align}
though it is not at all straightforward that this expression represents the same function $\varphi(x)$
as in Lemma~\ref{lem:Par}. The technique is discussed in related contexts, for example, in \cite[Section~4]{Zu02},
\cite[Section~7]{Zu04} and \cite[Section~2]{Ne10}. We use this strategy in Section~\ref{s1} below.
\end{remark}

Under the choice~\eqref{P-ex}, we get $\gamma_1=9$, $\gamma_2=8$ and
\begin{equation}
\varphi(x)=\begin{cases}
2 &\text{if $x\in\bigl[\frac16,\frac15\bigr)\cup\bigl[\frac14,\frac27\bigr)\cup\bigl[\frac12,\frac47\bigr)\cup\bigl[\frac56,\frac67\bigr)$}, \\
1 &\text{if $x\in\bigl[\frac18,\frac17\bigr)\cup\bigl[\frac15,\frac14\bigr)\cup\bigl[\frac27,\frac37\bigr)\cup\bigl[\frac47,\frac56\bigr)\cup\bigl[\frac67,\frac89\bigr)$}, \\
0 &\text{otherwise},
\end{cases}
\label{eq:P-ar}
\end{equation}
so that
$$
\lim_{n\to\infty}\frac{\log\Phi_n}n=8.12793878\dotsc.
$$
Taking then
$$
C_2=\lim_{n\to\infty}\frac{\log(\Phi_n^{-1}D_{9n}D_{8n})}n=9+8-8.12793878\hdots=8.87206121\dots
$$
and applying \cite[Lemma~2.1]{Ha93} we arrive at the following irrationality measure for $\zeta(2)$:
\begin{equation*}
\mu(\zeta(2))\le\frac{C_0+C_1}{C_0-C_2}=5.57728968\dotsc.
\end{equation*}
This estimate is clearly worse than the one obtained by Rhin and Viola in~\cite{RV96}.
We will see later that the inclusions \eqref{acorr} can be further sharpened in
our case~\eqref{P-ex}.

\begin{remark}
\label{rem3}
A different choice of parameters than in~\eqref{P-ex}, namely,
\begin{equation*}
\begin{alignedat}{4}
a_1&=4n+1, &\quad a_2&=5n+1, &\quad a_3&=6n+1, &\quad a_4&=\phantom07n+1,
\\
b_1&=1, &\quad b_2&=\phantom1n+1, &\quad b_3&=2n+1, &\quad b_4&=12n+2,
\end{alignedat}
\end{equation*}
allows us to obtain the estimate $\mu(\zeta(2))\le5.20514736\dots$ already better than the one in~\cite{RV96}.
This choice, however, fails to achieve a significant sharpening by means of the machinery that we discuss below.
\end{remark}

\section{Interlude: a hypergeometric integral}
\label{sbailey}

\begin{proposition}
\label{prop:int}
For each $n=0,1,2,\dots$, the following identity is true:
\begin{align}
&
\frac{(6n)!}{(7n)!\,(5n)!\,(3n)!}\,\frac1{2\pi i}\int_{C_1-i\infty}^{C_1+i\infty}
\frac{\Gamma(7n+1+t)\,\Gamma(6n+1+t)\,\Gamma(5n+1+t)}{\Gamma(1+t)\,\Gamma(n+1+t)\,\Gamma(2n+1+t)}
\nonumber\\ & \phantom{\frac{(6n)!}{(7n)!\,(5n)!\,(3n)!}\,} \qquad\times
\frac{\Gamma(8n+1+t)}{\Gamma(14n+2+t)}\biggl(\frac\pi{\sin\pi t}\biggr)^2\d t
\nonumber\\ &\;
=\frac{(6n)!^2}{(9n)!\,(3n)!}\,
\frac1{2\pi i}\int_{C_2-i\infty}^{C_2+i\infty}
\frac{\Gamma(11n+2+2t)\,\Gamma(3n+1+t)}{\Gamma(2n+2+2t)\,\Gamma(1+t)}
\nonumber\\ &\; \phantom{=\frac{(6n)!^2}{(9n)!\,(3n)!}\,} \qquad\times
\frac{\Gamma(4n+1+t)\,\Gamma(5n+1+t)}{\Gamma(10n+2+t)\,\Gamma(11n+2+t)}
\,\frac\pi{\sin2\pi t}\,\d t,
\label{P=T}
\end{align}
where the integration paths separate the two groups of poles of the integrands\textup;
for example, $C_1=-2n-1/2$ and $C_2=-1/2$.
\end{proposition}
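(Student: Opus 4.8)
The plan is to recognise \eqref{P=T} as a specialisation of a classical hypergeometric transformation, the two sides being two avatars of one and the same Barnes--Mellin integral. A convenient preliminary remark (for $n\ge1$) is that for the data \eqref{P-ex} one has $\Pi(\ba,\bb)=(6n)!/((7n)!\,(5n)!\,(3n)!)$, so the left-hand side of \eqref{P=T} is exactly $\frac1{2\pi i}\int(\pi/\sin\pi t)^2R(\ba,\bb;t)\,\d t$; since the line $\Re t=-2n-1/2$ lies in the pole-free strip $-a_2^*<\Re t<1-b_2^*$, this equals $(-1)^{n-1}r_n=(-1)^{n-1}(q_n\zeta(2)-p_n)$ of Section~\ref{app}. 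This is not used in the proof, but it records what is being claimed: \eqref{P=T} rewrites $r_n$ in a genuinely different hypergeometric guise.

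For the proof proper I would bring each integrand into a normalised shape. On the left the factor $(\pi/\sin\pi t)^2=\Gamma(t)^2\Gamma(1-t)^2$ carries a \emph{repeated} pair of arguments, the hallmark of a confluence of two hypergeometric parameters; on the right the pair $\Gamma(11n+2+2t)/\Gamma(2n+2+2t)=(2n+2+2t)_{9n}$ together with $\pi/\sin2\pi t=\Gamma(2t)\Gamma(1-2t)$ is the hallmark of a quadratic transformation. An identity matching a confluent (repeated-parameter) Barnes integral on one side to a quadratic-argument Barnes integral on the other is precisely what a degenerate case of Bailey's transformation of a very-well-poised ${}_7F_6$ provides\,---\,equivalently, a quadratic transformation of a well-poised series; the Barnes-integral forms of these are tabulated in Slater's monograph. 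So I would pick out of that family the identity whose two sides carry the correct number of $\Gamma$-factors and the correct power of $\sin$, substitute the parameters \eqref{P-ex}, check that the finitely many well-poisedness and Saalschütz balancing conditions hold for these linear-in-$n$ parameters, and finally reconcile the scalars $(6n)!/((7n)!\,(5n)!\,(3n)!)$ and $(6n)!^2/((9n)!\,(3n)!)$; this last step is just the duplication formula applied to the three $\Gamma$'s on the right that acquire half-integral arguments, together with elementary factorial manipulations.

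The genuinely delicate part, I expect, is twofold but really a single issue: selecting the exact member of the Bailey--Whipple transformation family that is in force, and then legitimising the degenerate limit at the level of contours\,---\,verifying that $\Re t=-2n-1/2$ and $\Re t=-1/2$ do separate the intended strings of poles (the gamma strings on the left; the gamma strings and the half-integer string of $\pi/\sin2\pi t$ on the right) and that no spurious residues appear when the general transformation is specialised to the coincidence of two parameters. As a fully self-contained fallback I would instead run creative telescoping (Zeilberger's algorithm) on the two integrands regarded as sequences in $n$, obtain a linear recurrence with polynomial coefficients for each side, check the two recurrences agree (or that one is a left multiple of the other), and verify \eqref{P=T} for $n=0$ and for as many further small $n$ as the order of the recurrence demands\,---\,each single case reducing by Lemmas~\ref{barnes} and~\ref{nest} (or, on the right, by a direct alternating-$\zeta(2)$ evaluation of the residue sum) to a rational identity. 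This second route is purely mechanical and, as a by-product, yields the explicit recurrence for $r_n$ promised after Lemma~\ref{lem:Pan}.
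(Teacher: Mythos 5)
Your ``fallback'' is in fact the paper's actual proof: the author runs the Gosper--Zeilberger algorithm of creative telescoping on the two integrands $R(t)$ and $\hat R(t)$, finds that both Barnes integrals satisfy one and the same third-order recurrence $s_0(n)r_{n+3}+s_1(n)r_{n+2}+s_2(n)r_{n+1}+s_3(n)r_n=0$ (with polynomial coefficients of degree~$64$), and then checks \eqref{P=T} directly for $n=0,1,2$. So the route you describe as a backup is precisely the intended, complete argument, down to the detail that the number of initial cases is dictated by the order of the recurrence; your preliminary normalisation ($\Pi(\ba,\bb)=(6n)!/((7n)!\,(5n)!\,(3n)!)$, $d=9n-1$, hence the left-hand side equals $(-1)^{n-1}r_n$) is also correct.

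Your primary route, however, has a genuine gap as stated. The transformation you want to ``pick out of the family tabulated in Slater'' is exactly the general identity \eqref{bmiss}, and the paper is explicit that no such Barnes-integral identity with the kernels $(\pi/\sin\pi t)^2$ and $\pi/\sin2\pi t$ could be located in the literature (Bailey only alludes to results of this kind without giving them, and the author regards a general proof via contiguous relations as a hard open task). What \emph{is} classical is Whipple's series transformation \eqref{whipple}, but, as Remark~\ref{rem5} explains, that only equates the coefficient sums $q(\ba,\bb)=\hat q(\hat\ba,\hat\bb)$ coming from the simple residues of the leading terms; it does not control the full linear forms $q\zeta(2)-p$ versus $\hat q\zeta(2)-\hat p$, i.e.\ it does not give the integral identity \eqref{P=T}, which also encodes the $p$-parts produced by the double poles of the kernels. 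So ``selecting the exact member of the Bailey--Whipple family'' is not a lookup but is essentially the content of the proposition itself, and the confluence/degenerate-limit issues you flag would compound the difficulty. In short: drop the first route (or treat it as motivation, as the paper does in Section~\ref{sbailey}) and promote the creative-telescoping argument to the proof proper.
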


\begin{proof}
Executing the Gosper--Zeilberger algorithm of creative telescoping for the rational functions
$$
R(t)=\frac{\prod_{j=1}^{7n}(t+j)}{(7n)!}\,\frac{\prod_{j=1}^{5n}(t+n+j)}{(5n)!}\,
\frac{\prod_{j=1}^{3n}(t+2n+j)}{(3n)!}\,\frac{(6n)!}{\prod_{j=1}^{6n+1}(t+8n+j)}
$$
and
$$
\hat R(t)
=\frac{\prod_{j=2}^{9n+1}(2t+2n+j)}{(9n)!}\,\frac{\prod_{j=1}^{3n}(t+j)}{(3n)!}\,
\frac{(6n)!}{\prod_{j=1}^{6n+1}(t+4n+j)}\,\frac{(6n)!}{\prod_{j=1}^{6n+1}(t+5n+j)},
$$
we find out that the integrals
$$
r_n=\frac1{2\pi i}\int_{-i\infty}^{i\infty}R(t)\biggl(\frac\pi{\sin\pi t}\biggr)^2\d t
\quad\text{and}\quad
\hat r_n=\frac1{2\pi i}\int_{-i\infty}^{i\infty}\hat R(t)\,\frac\pi{\sin2\pi t}\,\d t
$$
satisfy the \emph{same} recurrence equation
$$
s_0(n)r_{n+3}+s_1(n)r_{n+2}+s_2(n)r_{n+1}+s_3(n)r_n=0
\quad\text{for}\; n=0,1,2,\dots,
$$
where $s_0(n)$, $s_1(n)$, $s_2(n)$ and $s_3(n)$ are polynomials in $n$ of degree~64.
Verifying the equality in~\eqref{P=T} directly for $n=0$, $1$ and $2$, we conclude that it is valid for all~$n$.
\end{proof}

Other applications of the algorithm of creative telescoping to proving
identities for Barnes-type integrals are discussed in \cite{Gu13,St10}.

\begin{remark}
\label{rem4}
Note that the left-hand side in~\eqref{P=T} is the linear form from Section~\ref{gc1}
which corresponds to our particular choice \eqref{P-ex} of the parameters.
The characteristic polynomial of the recurrence equation is equal to
\begin{align*}
&
2^2\,3^{12}\,7^{14}\,\lambda^3+3^3\,7^7\,794493690983053821271\,\lambda^2
-2^{20}\,3^4\,7^5\,2687491277\,\lambda+2^{48},
\end{align*}
and its zeroes determine the asymptotics \eqref{eq:Pan} of $r_n$ and $q_n$ by means
of Poincar\'e's theorem.
\end{remark}

For a `sufficiently generic' set of \emph{integral} parameters, the following identity is expected to be true:
\begin{align}
&
\frac1{2\pi i}\int_{-i\infty}^{i\infty}
\frac{\Gamma(a+t)\,\Gamma(b+t)\,\Gamma(e+t)\,\Gamma(f+t)}{\Gamma(1+t)\,\Gamma(1+a-e+t)\,\Gamma(1+a-f+t)\,\Gamma(g+t)}
\biggl(\frac\pi{\sin\pi t}\biggr)^2\d t
\nonumber\\ &\;
=(-1)^{a+b+e+f}\frac{\Gamma(e+f-a)\,\Gamma(e)\,\Gamma(f)}{\Gamma(g-b)}
\nonumber\\ &\;\quad\times
\frac1{2\pi i}\int_{-i\infty}^{i\infty}
\frac{\Gamma(a-b+g+2t)\,\Gamma(a+t)\,\Gamma(e+t)\,\Gamma(f+t)}{\Gamma(1+a+2t)\,\Gamma(1+a-b+t)\,\Gamma(e+f+t)\,\Gamma(g+t)}
\,\frac\pi{\sin2\pi t}\,\d t.
\label{bmiss}
\end{align}
The satellite identity, in which $(\pi/\sin\pi t)^2$ and $\pi/\sin2\pi t$ are replaced with
$\pi^3\cos\pi t/\allowbreak(\sin\pi t)^3$ and $(\pi/\sin\pi t)^2$, respectively, is expected to hold as well;
the other integrals represent rational approximations to~$\zeta(3)$ \cite{DZ,Zu11}.
These identities can be possibly shown in full generality using contiguous relations for the integrals on both sides;
it seems to be a tough argument though.

Proposition~\ref{prop:int} is a particular case of \eqref{bmiss} when
$$
a=8n+1, \quad b=5n+1, \quad e=6n+1, \quad f=7n+1 \quad\text{and}\quad g=14n+2.
$$
Identity \eqref{bmiss} and its satellite should be a special case of a hypergeometric-integral identity valid for
generic \emph{complex} parameters. We could not detect the existence of the more general identity
in the literature, though there are a few words about it at the end of W.\,N.~Bailey's paper~\cite{Ba32}:
\begin{quote}
``The formula (1.4)\footnote{This formula appears as equation \eqref{whipple} below.}
and its successor are rather more troublesome to generalize, and the final result was unexpected.
The formulae obtained involve five series instead of three or four as previously obtained.
In each case two of the series are nearly-poised and of the second kind, one is nearly-poised
and of the first kind, and the other two are Saalsch\"utzian in type.
In the course of these investigations some integrals of Barnes's type
are evaluated analogous to known sums of hypergeometric series. Considerations of space,
however, prevent these results being given in detail.''
\end{quote}
It is quite similar in spirit to Fermat's famous ``I have discovered a truly marvelous proof of this,
which this margin is too narrow to contain'', is not it?
Interestingly enough, the last paragraph in Chapter~6 of Bailey's book \cite{Ba35} again reveals us
with no details about the troublesome generalization. Did Bailey possess the identity?

\section{Second hypergeometric tale}
\label{s1}

Our discussion in the previous section suggests a different construction of rational approximations to $\zeta(2)$.
This time we design the rational function to be
\begin{align}
\hat R(t)
&=\hat R(\hat\ba,\hat\bb;t)
=\frac{(2t+\hat b_0)(2t+\hat b_0+1)\dotsb(2t+\hat a_0-1)}{(\hat a_0-\hat b_0)!}
\cdot\frac{(t+\hat b_1)\dotsb(t+\hat a_1-1)}{(\hat a_1-\hat b_1)!}
\nonumber\\ &\phantom{=R(\ba,\bb;t)} \qquad\times
\frac{(\hat b_2-\hat a_2-1)!}{(t+\hat a_2)\dotsb(t+\hat b_2-1)}
\cdot\frac{(\hat b_3-\hat a_3-1)!}{(t+\hat a_3)\dotsb(t+\hat b_3-1)}
\nonumber\displaybreak[2]\\
&=\hat\Pi(\hat\ba,\hat\bb)\cdot\frac{\Gamma(2t+\hat a_0)\,\Gamma(t+\hat a_1)\,\Gamma(t+\hat a_2)\,\Gamma(t+\hat a_3)}
{\Gamma(2t+\hat b_0)\,\Gamma(t+\hat b_1)\,\Gamma(t+\hat b_2)\,\Gamma(t+\hat b_3)},
\nonumber
\end{align}
where
$$
\hat\Pi(\hat\ba,\hat\bb)
=\frac{(\hat b_2-\hat a_2-1)!\,(\hat b_3-\hat a_3-1)!}{(\hat a_0-\hat b_0)!\,(\hat a_1-\hat b_1)!}
$$
and the integral parameters
\begin{equation*}
(\hat\ba,\hat\bb)
=\biggl(\begin{matrix} \hat a_0; \hat a_1, \, \hat a_2, \, \hat a_3 \\ \hat b_0; \, \hat b_1, \, \hat b_2, \, \hat b_3 \end{matrix}\biggr)
\end{equation*}
satisfy the conditions
\begin{equation}
\begin{gathered}
\tfrac12\hat b_0,\hat b_1\le\tfrac12\hat a_0,\hat a_1,\hat a_2,\hat a_3<\hat b_2,\hat b_3,
\\
\hat a_0+\hat a_1+\hat a_2+\hat a_3=\hat b_0+\hat b_1+\hat b_2+\hat b_3-2.
\end{gathered}
\label{cond2}
\end{equation}
The latter condition implies that $\hat R(t)=O(1/t^2)$ as $t\to\infty$.
Though it will not be as important as it was in our arithmetic consideration of Section~\ref{gc1},
we introduce the ordered versions $\hat a_1^*\le\hat a_2^*\le\hat a_3^*$
of the parameters $\hat a_1,\hat a_2,\hat a_3$ and $\hat b_2^*\le\hat b_3^*$ of $\hat b_2,\hat b_3$.
Then this ordering and conditions~\eqref{cond2} imply that
the rational function $\hat R(t)$ has poles at $t=-k$ for $\hat a_2^*\le k\le\hat b_3^*-1$,
double poles at $t=-k$ for $\hat a_3^*\le k\le\hat b_2^*-1$, and zeroes
at $t=-\ell/2$ for $\hat b_0\le\ell\le\hat a_0^*-1$ where $\hat a_0^*=\min\{\hat a_0,2\hat a_2^*\}$.

The partial-fraction decomposition of the regular rational function $\hat R(t)$ assumes the form
\begin{equation}
\hat R(t)=\sum_{k=\hat a_3^*}^{\hat b_2^*-1}\frac{A_k}{(t+k)^2}+\sum_{k=\hat a_2^*}^{\hat b_3^*-1}\frac{B_k}{t+k},
\nonumber
\end{equation}
where
\begin{align}
A_k
&=\bigl(\hat R(t)(t+k)^2\bigr)|_{t=-k}
\nonumber\\
&=(-1)^{\hat d}\binom{2k-\hat b_0}{2k-\hat a_0}\binom{k-\hat b_1}{k-\hat a_1}
\binom{\hat b_2-\hat a_2-1}{k-\hat a_2}\binom{\hat b_3-\hat a_3-1}{k-\hat a_3}\in\mathbb Z
\label{eq:T2}
\end{align}
with $\hat d=\hat b_2+\hat b_3$,
for $k=\hat a_3^*,\hat a_3^*+1,\dots,\hat b_2^*-1$ and, similarly,
$$
B_k
=\frac{\d}{\d t}\bigl(\hat R(t)(t+k)^2\bigr)|_{t=-k}
$$
for $k=\hat a_2^*,\hat a_2^*+1,\dots,\hat b_3^*-1$.
In addition,
\begin{equation}
\sum_{k=\hat a_2^*}^{\hat b_3^*-1}B_k
=-\Res_{t=\infty}\hat R(t)=0
\label{eq:T4}
\end{equation}
by the residue sum theorem.

The inclusions
\begin{equation}
D_{\max\{\hat a_0-\hat b_0,\hat a_1-\hat b_1,\hat b_3^*-\hat a_2-1,\hat b_3^*-\hat a_3-1\}}\cdot B_k\in\mathbb Z
\label{eq:T3}
\end{equation}
follow then from standard consideration; see, for example, Lemma~3 and the proof of Lemma~4 in~\cite{Zu04}.
More importantly, for primes $p$ we have
\begin{align}
\ord_pA_k, \, 1+\ord_pB_k
&\ge\biggl\lf\frac{2k-\hat b_0}p\biggr\rf
-\biggl\lf\frac{2k-\hat a_0}p\biggr\rf
-\biggl\lf\frac{\hat a_0-\hat b_0}p\biggr\rf
\nonumber\\ &\quad
+\biggl\lf\frac{k-\hat b_1}p\biggr\rf
-\biggl\lf\frac{k-\hat a_1}p\biggr\rf
-\biggl\lf\frac{\hat a_1-\hat b_1}p\biggr\rf
\nonumber\\ &\quad
+\sum_{j=2}^3\biggl(\biggl\lf\frac{\hat b_j-\hat a_j-1}p\biggr\rf
-\biggl\lf\frac{k-\hat a_j}p\biggr\rf
-\biggl\lf\frac{\hat b_j-1-k}p\biggr\rf\biggr)
\label{eq:T3a}
\end{align}
for $k=\hat a_2^*,\dots,\hat b_3^*-1$.
These estimates on the $p$-adic order of the coefficients in the partial-fraction decomposition of $\hat R(t)$ follow
from \cite[Lemmas~17 and~18]{Zu04}.

The quantity of our interest in this section is
\begin{equation}
\hat r(\hat\ba,\hat\bb)
=\frac{(-1)^{\hat d}}{2\pi i}\int_{C/2-i\infty}^{C/2+i\infty}\frac\pi{\sin2\pi t}\,\hat R(\hat\ba,\hat\bb;t)\,\d t,
\nonumber
\end{equation}
where $C$ is arbitrary from the interval $-\hat a_0^*<C<1-\hat b_0$.

\begin{proposition}
\label{prop2}
We have
\begin{equation}
\hat r(\hat\ba,\hat\bb)=\hat q(\hat\ba,\hat\bb)\zeta(2)-\hat p(\hat\ba,\hat\bb),
\qquad\text{with}\quad
\hat q(\hat\ba,\hat\bb)\in\mathbb Z, \quad D_{\hat c_1}D_{\hat c_2}\hat p(\hat\ba,\hat\bb)\in\mathbb Z,
\label{eq:T6}
\end{equation}
where
\begin{equation*}
\begin{gathered}
\hat c_1=\max\{\hat a_0-\hat b_0,\hat a_1-\hat b_1,\hat b_3^*-\hat a_2-1,\hat b_3^*-\hat a_3-1,2\hat b_2^*-\hat a_0^*-2\},
\\
\hat c_2=2\hat b_3^*-\hat a_0^*-2.
\end{gathered}
\end{equation*}
\end{proposition}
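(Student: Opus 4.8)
The plan is to mimic the proof of Proposition~\ref{prop1} as closely as possible, replacing Barnes's first lemma and Lemma~\ref{nest} by their ``doubled-argument'' analogues dictated by the kernel $\pi/\sin2\pi t$ in place of $(\pi/\sin\pi t)^2$. First I would substitute $t\mapsto t-\hat a_0^*/2$ (or, more cleanly, choose $C=1/2-\hat a_0^*$ so that the contour is $\Re t=1/4-\hat a_0^*/2$), shifting all poles to the right of the contour and bringing the integral into a normalised position where the partial-fraction decomposition $\hat R(t)=\sum_k A_k/(t+k)^2+\sum_k B_k/(t+k)$ can be integrated term by term against the kernel $\pi/\sin2\pi t$. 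The residue-theorem bookkeeping is already set up: the coefficients $A_k$ are integers by~\eqref{eq:T2}, the $B_k$ satisfy the denominator bound~\eqref{eq:T3} with $D_{\hat c_1}$ absorbing the relevant maximum, and $\sum_k B_k=0$ by~\eqref{eq:T4}, which is exactly what forces the ``$\zeta(2)$ part'' to come out with integer coefficient.

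Next I would record the two elementary integral evaluations that take the roles of Lemmas~\ref{barnes} and~\ref{nest}. Integrating $1/(t+k)^2$ against $\pi/\sin2\pi t$ along the central contour produces, after partial integration turning $(\pi/\sin2\pi t)\,\d t$ into $\d(-\tfrac12\cot2\pi t)$ and then summing residues over the poles at half-integers as in the proof of Lemma~\ref{nest}, a value of the shape $\tfrac14\bigl(\zeta(2)-\sum_{\ell\le m}\ell^{-2}\bigr)$ up to an explicit rational correction coming from the half-integer poles; integrating $1/(t+k)$ against the same kernel gives a purely rational number (a finite sum of the form $\sum(-1)^j/(j+\text{const})$, i.e.\ an alternating harmonic-type tail), with denominator dividing $D_{\hat c_2}$ since the poles contributing lie in the range controlled by $\hat c_2=2\hat b_3^*-\hat a_0^*-2$. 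Assembling these, one gets
\[
\hat r(\hat\ba,\hat\bb)=\zeta(2)\cdot(-1)^{\hat d}\cdot\tfrac14\sum_{k}A_k
\;+\;(\text{rational combination of }A_k,B_k),
\]
and here the factor $\tfrac14$ is exactly cancelled by noting that $\sum_k A_k\in4\mathbb Z$ — or, more robustly, by checking directly from the residue computation that the coefficient of $\zeta(2)$ is an integer; I would verify this cancellation carefully rather than wave at it. The denominator of the rational part is then a divisor of $D_{\hat c_1}D_{\hat c_2}$ by the inclusions~\eqref{eq:T3} together with the $D_{\hat c_2}$-bound from the $1/(t+k)$ integrals, which gives~\eqref{eq:T6}.

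The main obstacle I anticipate is pinning down the precise rational contributions from the half-integer poles of $\pi/\sin2\pi t$ — these are genuinely new compared with Section~\ref{gc1}, where the kernel $(\pi/\sin\pi t)^2$ has only integer poles — and making sure that their denominators are still captured by $D_{\hat c_1}$ and $D_{\hat c_2}$ rather than introducing a stray factor of $2$ or a larger denominator. Concretely, when partial integration is applied the boundary/residue terms at $t=-k$ and at the half-integers $t=-k\pm\tfrac12$ must be tracked, and one must confirm that the residue-sum argument (the rectangular-contour-and-let-$N\to\infty$ device from~\cite[Lemma~2.4]{Zu02} used in Lemma~\ref{nest}) still applies verbatim given that $\hat R(t)=O(1/t^2)$ by~\eqref{cond2}. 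A secondary, purely organisational point is the choice of $\hat a_0^*=\min\{\hat a_0,2\hat a_2^*\}$ in the definitions of $\hat c_1,\hat c_2$: one has to check that the contour $-\hat a_0^*<C<1-\hat b_0$ indeed separates the two groups of poles in all cases permitted by~\eqref{cond2}, so that the shift is legitimate and the decomposition is finite. Once these bookkeeping issues are settled the proposition follows exactly as Proposition~\ref{prop1} did.
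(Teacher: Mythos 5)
Your overall plan (insert the partial-fraction decomposition of $\hat R$ and evaluate the integral termwise by summing residues of the kernel $\pi/\sin2\pi t$ at the half-integer points) is essentially the computation the paper performs, but two of your intermediate claims are wrong, and one of them conceals the actual crux of the proof. First, the integral of a single term $B_k/(t+k)$ against $\pi/\sin2\pi t$ is \emph{not} a purely rational number: shifting the contour produces the conditionally convergent alternating tail $\sum_m(-1)^{m-1}/(m+2k)$, whose value is $\pm\log2$ plus a rational number. These transcendental $\log2$ contributions cancel only after summing over $k$, precisely because $\sum_kB_k=0$ by \eqref{eq:T4}; that, and not the integrality of the $\zeta(2)$-coefficient, is the role of \eqref{eq:T4}. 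As written, your termwise evaluation would leave you with a linear form in $1$, $\zeta(2)$ \emph{and} $\log2$, and you never invoke the cancellation that removes the $\log2$ part.

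Second, there is no factor $\tfrac14$ to cancel, and the claim $\sum_kA_k\in4\mathbb Z$ is neither needed nor true in general. The residue of $\pi/\sin2\pi t$ at $t=m/2$ is $(-1)^m/2$, the double-pole term contributes $1/(m/2+k)^2=4/(m+2k)^2$, and the relevant alternating series is $\sum_{\ell\ge1}(-1)^{\ell-1}/\ell^2=\zeta(2)/2$; the factors $\tfrac12\cdot4\cdot\tfrac12=1$ combine so that the coefficient of $\zeta(2)$ is exactly $(-1)^{\hat d}\sum_kA_k\in\mathbb Z$. This is what the paper's computation yields: $\hat q=(-1)^{\hat d}\sum_kA_k$, while $\hat p$ is (up to sign) $\sum_k2A_k\sum_{\ell\le2k-\hat a_0^*}(-1)^{\ell-1}/\ell^2+\sum_kB_k\sum_{\ell\le2k-\hat a_0^*}(-1)^{\ell-1}/\ell$, whose denominator is controlled by $D_{\hat c_1}D_{\hat c_2}$ via \eqref{eq:T2} and \eqref{eq:T3} together with $2k-\hat a_0^*\le2\hat b_2^*-\hat a_0^*-2$ resp.\ $2\hat b_3^*-\hat a_0^*-2$. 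A smaller slip: your partial-integration step uses a wrong antiderivative, since $\pi\,\d t/\sin2\pi t=\d\bigl(\tfrac12\log\tan\pi t\bigr)$, not $\d\bigl(-\tfrac12\cot2\pi t\bigr)$; in fact no partial integration is needed at all, because unlike $(\pi/\sin\pi t)^2$ the kernel $\pi/\sin2\pi t$ has only simple poles, so the residue sum is immediate.
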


\begin{proof}
We use
$$
\Res_{t=m/2}\frac{\pi\hat R(t)}{\sin2\pi t}
=\frac{(-1)^m}2\,\hat R(t)\big|_{t=m/2}
$$
for $m\ge1-\hat a_0^*$, to write
\begin{align*}
\hat r(\hat\ba,\hat\bb)
&=-\frac{(-1)^{\hat d}}2\sum_{m=1-\hat a_0^*}^\infty(-1)^m\hat R(t)\big|_{t=m/2}
\\
&=(-1)^{\hat d}\sum_{k=\hat a_3^*}^{\hat b_2^*-1}2A_k\sum_{m=1-\hat a_0^*}^\infty\frac{(-1)^{m-1}}{(m+2k)^2}
+(-1)^{\hat d}\sum_{k=\hat a_2^*}^{\hat b_3^*-1}B_k\sum_{m=1-\hat a_0^*}^\infty\frac{(-1)^{m-1}}{m+2k}
\displaybreak[2]\\
&=2\sum_{\ell=1}^\infty\frac{(-1)^{\ell-1}}{\ell^2}\cdot(-1)^{\hat d}\sum_{k=\hat a_3^*}^{\hat b_2^*-1}A_k
-(-1)^{\hat d}\sum_{k=\hat a_3^*}^{\hat b_2^*-1}2A_k\sum_{\ell=1}^{2k-\hat a_0^*}\frac{(-1)^{\ell-1}}{\ell^2}
\\ &\qquad
+\sum_{\ell=1}^\infty\frac{(-1)^{\ell-1}}{\ell}\cdot(-1)^{\hat d}\sum_{k=\hat a_2^*}^{\hat b_3^*-1}B_k
-(-1)^{\hat d}\sum_{k=\hat a_2^*}^{\hat b_3^*-1}B_k\sum_{\ell=1}^{2k-\hat a_0^*}\frac{(-1)^{\ell-1}}{\ell}
\displaybreak[2]\\
&=\zeta(2)\cdot(-1)^{\hat d}\sum_{k=\hat a_3^*}^{\hat b_2^*-1}A_k
-(-1)^{\hat d}\sum_{k=\hat a_3^*}^{\hat b_2^*-1}2A_k\sum_{\ell=1}^{2k-\hat a_0^*}\frac{(-1)^{\ell-1}}{\ell^2}
\\ &\qquad
-(-1)^{\hat d}\sum_{k=\hat a_2^*}^{\hat b_3^*-1}B_k\sum_{\ell=1}^{2k-\hat a_0^*}\frac{(-1)^{\ell-1}}{\ell},
\end{align*}
where equality \eqref{eq:T4} was used.
In view of the inclusions \eqref{eq:T2}, \eqref{eq:T3}
the found representation of $\hat r(\hat\ba,\hat\bb)$ implies the form~\eqref{eq:T6}.
\end{proof}

\begin{remark}
\label{rem5}
The binomial expressions \eqref{eq:P2} and \eqref{eq:T2} allow us to write
$$
q(\ba,\bb)=(-1)^d\sum_{k=a_4^*}^{b_4-1}C_k
\quad\text{and}\quad
\hat q(\hat\ba,\hat\bb)=(-1)^{\hat d}\sum_{k=\hat a_3^*}^{\hat b_2^*-1}A_k
$$
as certain ${}_4F_3$- and ${}_5F_4$-hypergeometric series, respectively
(see the books \cite{Ba35,Sl} for the definition of generalized hypergeometric series).
Then Whipple's classical transformation \cite[p.~65, eq.~(2.4.2.3)]{Sl},
\begin{multline}
{}_4F_3\biggl(\begin{matrix} f, \, 1+f-h, \, h-a, \, -N \\
h, \, 1+f+a-h, \, g \end{matrix}\biggm|1\biggr)
=\frac{(g-f)_N}{(g)_N}
\\ \times
{}_5F_4\biggl(\begin{alignedat}{5}
a&, \, & -N&, \, & 1+f-g&, \, & \tfrac12f&, \, & \tfrac12f+\tfrac12 \\
&& h&, \, & 1+f+a-h&, \, & \tfrac12(1+f-N-g)&, \, & \tfrac12(1+f-N-g)+\tfrac12 \\
\end{alignedat}\biggm|1\biggr),
\label{whipple}
\end{multline}
can be stated as the following identity:
\begin{equation*}
q\biggl(\begin{alignedat}{4} a_1&, &\, a_2&, &\, a_3&, &\, a_4& \\ 1&, &\, a_4-a_1&+1, &\, a_4-a_2&+1, &\, b_4& \end{alignedat}\biggr)
=\hat q\biggl(\begin{alignedat}{4}  b_4-a_3+a_4&; &\, a_2&, &\, a_1&, &\, a_4& \\ a_4+1&; &\, a_4-a_3&+1, &\, a_1\,+\,&a_2, &\, b_4& \end{alignedat}\biggr).
\end{equation*}
Note that \eqref{bmiss} is equivalent to
\begin{equation*}
r\biggl(\begin{alignedat}{4} a_1&, &\, a_2&, &\, a_3&, &\, a_4& \\ 1&, &\, a_4-a_1&+1, &\, a_4-a_2&+1, &\, b_4& \end{alignedat}\biggr)
=\hat r\biggl(\begin{alignedat}{4}  b_4-a_3+a_4&; &\, a_2&, &\, a_1&, &\, a_4& \\ a_4+1&; &\, a_4-a_3&+1, &\, a_1\,+\,&a_2, &\, b_4& \end{alignedat}\biggr),
\end{equation*}
so that it is Whipple's transformation \eqref{whipple} that offers us to expect the coincidence
of the two families of linear forms in $1$ and $\zeta(2)$.
\end{remark}

As in Section~\ref{app}, we take the parameters $(\hat\ba,\hat\bb)$ as follows:
\begin{equation}
\begin{alignedat}{4}
\hat a_0&=\hat\alpha_0n+2, &\quad \hat a_1&=\hat\alpha_1n+1, &\quad \hat a_2&=\hat\alpha_2n+1, &\quad \hat a_3&=\hat\alpha_3n+1,
\\
\hat b_0&=\hat\beta_0n+2, &\quad \hat b_1&=\hat\beta_1n+1, &\quad \hat b_2&=\hat\beta_2n+2, &\quad \hat b_3&=\hat\beta_3n+2,
\end{alignedat}
\label{T-gen}
\end{equation}
where the fixed integers $\hat\alpha_j$ and $\hat\beta_j$, $j=0,\dots,3$, satisfy
$$
\begin{gathered}
\tfrac12\hat\beta_0,\hat\beta_1<\tfrac12\hat\alpha_0,\hat\alpha_1,\hat\alpha_2,\hat\alpha_3<\hat\beta_2,\hat\beta_3,
\quad
\hat\alpha_0+\hat\alpha_1+\hat\alpha_2+\hat\alpha_3=\hat\beta_0+\hat\beta_1+\hat\beta_2+\hat\beta_3
\end{gathered}
$$
to ensure that all hypotheses \eqref{cond2} are satisfied.
Though we can give the analogue of Lemma~\ref{lem:Pan}, our principal interest in the construction
of this section is purely arithmetic.

\begin{lemma}
\label{lem:Tar}
Assuming the choice \eqref{T-gen}, for each prime $p$ we have
$$
\ord_p\hat q(\hat\ba,\hat\bb)\ge\hat\varphi(n/p)
\quad\text{and}\quad
\ord_p\hat p(\hat\ba,\hat\bb)\ge-2+\hat\varphi(n/p),
$$
where the \textup($1$-periodic and integer-valued\textup) function $\hat\varphi(x)$ is given by
\begin{align*}
\hat\varphi(x)=\min_{0\le y<1}\biggl(
&
\lf 2y-\hat\beta_0x\rf-\lf 2y-\hat\alpha_0x\rf-\lf(\hat\alpha_0-\hat\beta_0)x\rf
\\[-4.5pt] &\;
+\lf y-\hat\beta_1x\rf-\lf y-\hat\alpha_1x\rf-\lf(\hat\alpha_1-\hat\beta_1)x\rf
\\[-3pt] &\;
+\sum_{j=2}^3\bigl(\lf(\hat\beta_j-\hat\alpha_j)x\rf-\lf\hat\beta_jx-y\rf-\lf y-\hat\alpha_jx\rf\bigr)\biggr).
\end{align*}
\end{lemma}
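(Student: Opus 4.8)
The plan is to reduce the statement to the two ingredients already assembled in this section: the binomial representation $\hat q(\hat\ba,\hat\bb)=(-1)^{\hat d}\sum_k A_k$ together with the lower bound \eqref{eq:T3a} for $\ord_pA_k$ and $1+\ord_pB_k$, and the representation of $\hat p(\hat\ba,\hat\bb)$ coming from the proof of Proposition~\ref{prop2}. First I would observe that $\ord_p\hat q(\hat\ba,\hat\bb)\ge\min_k\ord_pA_k$, so it suffices to show that the right-hand side of \eqref{eq:T3a}, specialised to the parameters \eqref{T-gen}, is bounded below by $\hat\varphi(n/p)$ for every $k$ in the relevant range; the estimate for $\hat p$ will follow in the same breath, because the representation of $\hat r(\hat\ba,\hat\bb)$ in the proof of Proposition~\ref{prop2} writes $\hat p(\hat\ba,\hat\bb)$ as a $\mathbb Z$-linear combination of the $A_k$ and $B_k$ with coefficients that are partial sums of $\sum(-1)^{\ell-1}/\ell^2$ and $\sum(-1)^{\ell-1}/\ell$; since $\ord_p(1/\ell^2)\ge-2\ord_p\ell$ and $\ord_p(1/\ell)\ge-\ord_p\ell$, and since $\ell$ ranges over $1,\dots,2k-\hat a_0^*=O(n)$, the only prime powers in denominators that survive are at worst $p^2$ when $p\le 2\hat b_3^*-\hat a_0^*-2$, giving the shift by $-2$ in $\ord_p\hat p$. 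So the entire statement collapses to one inequality about floor functions.

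The heart of the matter is therefore to pass from the \emph{fixed-$k$} lower bound \eqref{eq:T3a}, which after the substitution \eqref{T-gen} reads (writing $x=n/p$ and $z=k/p$, and using $\lf 2k-\hat b_0\rf/p=\lf 2z-\hat\beta_0 x\rf+\text{const}$ etc., the constants cancelling in each of the four grouped differences)
\[
\ord_pA_k\;\ge\;\Psi(x,z):=
\lf 2z-\hat\beta_0x\rf-\lf 2z-\hat\alpha_0x\rf-\lf(\hat\alpha_0-\hat\beta_0)x\rf
+\lf z-\hat\beta_1x\rf-\lf z-\hat\alpha_1x\rf-\lf(\hat\alpha_1-\hat\beta_1)x\rf
+\sum_{j=2}^3\bigl(\lf(\hat\beta_j-\hat\alpha_j)x\rf-\lf z-\hat\alpha_jx\rf-\lf(\hat\beta_j-1)x-z\rf\bigr),
\]
to the \emph{minimum over a continuous parameter} $\hat\varphi(x)=\min_{0\le y<1}\Psi^{\#}(x,y)$, where $\Psi^{\#}$ is the expression in the statement of the lemma. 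The key point is that $\Psi(x,z)$ depends on $z$ only through its fractional part modulo the relevant denominators, i.e. only through $y:=\{z\}=\{k/p\}$, because each term $\lf cz+(\text{lin in }x)\rf$ can be rewritten, modulo an integer independent of $z$, as a floor of $cy$ plus a linear function of $x$. Concretely $\lf 2z-\hat\beta_0x\rf\equiv\lf 2y-\hat\beta_0x\rf$, $\lf z-\hat\beta_1 x\rf\equiv\lf y-\hat\beta_1 x\rf$, $\lf z-\hat\alpha_j x\rf\equiv\lf y-\hat\alpha_j x\rf$, and $\lf(\hat\beta_j-1)x-z\rf\equiv\lf\hat\beta_jx-y\rf$ modulo integers depending only on $x$ — and a short bookkeeping check shows all these integer shifts sum to zero, so $\Psi(x,z)=\Psi^{\#}(x,\{k/p\})$ exactly. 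Hence $\ord_p A_k\ge\Psi^{\#}(x,\{k/p\})\ge\min_{0\le y<1}\Psi^{\#}(x,y)=\hat\varphi(x)$, uniformly in $k$; the same computation applies verbatim to $1+\ord_pB_k$. Finally, $\hat\varphi$ is visibly $1$-periodic in $x$ (each floor difference $\lf u+1\rf-\lf v+1\rf=\lf u\rf-\lf v\rf$ for the terms linear in $x$ with integer coefficients, and the $2x$ terms shift by an even integer) and integer-valued, as it is a finite $\mathbb Z$-combination of floors.

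The main obstacle — and the step that needs genuine care rather than routine manipulation — is the exact identification $\Psi(x,z)=\Psi^{\#}(x,\{k/p\})$, i.e. checking that the integer shifts introduced when replacing each $\lf cz+(\text{lin})\rf$ by the corresponding floor of $c\{z\}$ cancel in total. This is where the precise arithmetic of the parameter normalisation \eqref{T-gen} (the ``$+1$'' versus ``$+2$'' offsets, which are designed exactly so that $2\hat a_0-\hat a_0$-type combinations behave) enters, and it is also where one must be slightly careful about the double-pole term $A_k$ versus the simple-pole term $B_k$: the factor $(2t+\hat b_0)\cdots(2t+\hat a_0-1)/(\hat a_0-\hat b_0)!$ contributes a $\binom{2k-\hat b_0}{2k-\hat a_0}$, whose $p$-adic order is governed by Kummer's theorem / the floor formula with the argument $2k$, and one must confirm this matches the $\lf 2y-\hat\beta_0x\rf-\lf 2y-\hat\alpha_0x\rf-\lf(\hat\alpha_0-\hat\beta_0)x\rf$ block including the case $p=2$. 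Granting that bookkeeping, the rest is immediate, and for the contiguous alternative formula one invokes the remark that the identity of the two expressions for $\hat\varphi$ (the ``min over $y$'' form versus the ``max over permutations'' form that appears for $\varphi$ in Lemma~\ref{lem:Par}) is precisely the technique referenced in Remark~\ref{rem2}, via \cite[Section~4]{Zu02}, \cite[Section~7]{Zu04} and \cite[Lemmas~17 and~18]{Zu04}.
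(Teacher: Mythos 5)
Your strategy is the same as the paper's: combine the $p$-adic estimates \eqref{eq:T3a} with the explicit expressions for $\hat q(\hat\ba,\hat\bb)$ and $\hat p(\hat\ba,\hat\bb)$ from the proof of Proposition~\ref{prop2}, and recognise the right-hand side of \eqref{eq:T3a}, specialised to \eqref{T-gen}, as the minimand defining $\hat\varphi$ at one particular value of $y$. But the step you yourself single out as the ``main obstacle'' is precisely where your version is wrong, and you then defer it (``granting that bookkeeping''). The claim that the constants from the $+1$/$+2$ offsets ``cancel in each of the four grouped differences'' is false as stated: constants sitting inside two different floors do not cancel against each other ($\lf u-2/p\rf-\lf v-2/p\rf\ne\lf u\rf-\lf v\rf$ in general), so your $\Psi(x,z)$ is not the exact specialisation of \eqref{eq:T3a}, and the asserted identity $\Psi(x,z)=\Psi^{\#}(x,\{k/p\})$ with $z=k/p$ does not hold; your term $\lf(\hat\beta_j-1)x-z\rf$ is moreover a mis-substitution, since $\hat b_j-1-k=\hat\beta_jn-(k-1)$, not $(\hat\beta_j-1)n-k$. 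The clean statement, which is the entire content of the paper's short proof, is that the offsets in \eqref{T-gen} are chosen so that every argument in \eqref{eq:T3a} is exactly an integer coefficient times $n$ plus a multiple of $k-1$: namely $2k-\hat b_0=2(k-1)-\hat\beta_0n$, $2k-\hat a_0=2(k-1)-\hat\alpha_0n$, $k-\hat b_1=(k-1)-\hat\beta_1n$, $k-\hat a_j=(k-1)-\hat\alpha_jn$, $\hat b_j-\hat a_j-1=(\hat\beta_j-\hat\alpha_j)n$ and $\hat b_j-1-k=\hat\beta_jn-(k-1)$. Hence the right-hand side of \eqref{eq:T3a} is \emph{exactly} the minimand evaluated at $x=n/p$, $y=(k-1)/p$ (no ``modulo integers'' adjustment is needed), and since the minimand is $1$-periodic in $y$ one may reduce $(k-1)/p$ modulo $1$ and bound it below by $\hat\varphi(n/p)$, uniformly in $k$; this is the paper's assignment $y=(k-1)/p$. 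Your final inequality survives your misidentification only because the minimum runs over all $y\in[0,1)$, but as written the proof rests on a false intermediate claim and leaves the single non-routine verification undone.

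Two smaller points. For $\ord_p\hat p\ge-2+\hat\varphi(n/p)$ your reasoning via the partial sums of $\sum(-1)^{\ell-1}/\ell^2$ and $\sum(-1)^{\ell-1}/\ell$ matches the paper, but ``at worst $p^2$ in the denominator'' requires $p^2$ to exceed the summation range $2k-\hat a_0^*$, not merely $p\le2\hat b_3^*-\hat a_0^*-2$ (the paper is equally terse on this, and in the application the extra powers of small primes in $D_{9n}D_{8n}$ absorb the discrepancy). Finally, the closing appeal to Remark~\ref{rem2} is out of place: that remark concerns the alternative ``max over permutations'' expression for the function $\varphi$ of Lemma~\ref{lem:Par}, and plays no role in proving Lemma~\ref{lem:Tar}.
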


\begin{proof}
This follows from the estimates \eqref{eq:T3a}, the explicit expressions for $\hat q(\hat\ba,\hat\bb)$
and $\hat p(\hat\ba,\hat\bb)$ given in the proof of Proposition~\ref{prop2}: we simply assign
$y=(k-1)/p$ and minimise over~$k$.
\end{proof}

Note that the special choice of parameters $(\hat\ba,\hat\bb)$,
\begin{equation*}
\begin{alignedat}{4}
\hat a_0&=11n+2, &\quad \hat a_1&=3n+1, &\quad \hat a_2&=\phantom04n+1, &\quad \hat a_3&=\phantom05n+1,
\\
\hat b_0&=\phantom02n+2, &\quad \hat b_1&=1, &\quad \hat b_2&=10n+2, &\quad \hat b_3&=11n+2,
\end{alignedat}
\end{equation*}
results in the linear forms
\begin{equation*}
\hat r_n=\hat r(\hat\ba,\hat\bb)=\hat q_n\zeta(2)-\hat p_n,
\end{equation*}
which are related, by Proposition \ref{prop:int}, to the linear forms \eqref{P-ex}, \eqref{P-fin} as follows:
$$
r_n=q_n\zeta(2)-p_n=\hat q_n\zeta(2)-\hat p_n,
$$
so that $q_n=\hat q_n$ and $p_n=\hat p_n$ for $n=0,1,2,\dots$\,. With the help of Lemma~\ref{lem:Tar} we find then that
$$
\hat\Phi_n^{-1}q_n=\hat\Phi_n^{-1}\hat q_n\in\mathbb Z
\quad\text{and}\quad
\hat\Phi_n^{-1}D_{9n}D_{8n}p_n=\hat\Phi_n^{-1}D_{9n}D_{8n}\hat p_n\in\mathbb Z,
$$
where $\hat\Phi_n=\prod_{p\le8n}p^{\hat\varphi(n/p)}$ and
\begin{align}
\hat\varphi(x)
&=\min_{0\le y<1}\bigl(\lf 2y-2x\rf-\lf 2y-11x\rf-\lf9x\rf
+\lf y\rf-\lf y-3x\rf-\lf3x\rf
\nonumber\\ &\quad
+\lf6x\rf-\lf10x-y\rf-\lf y-4x\rf
+\lf6x\rf-\lf11x-y\rf-\lf y-5x\rf\bigr)
\nonumber\\
&=\begin{cases}
2 &\text{if $x\in\bigl[\frac16,\frac29\bigr)\cup\bigl[\frac12,\frac59\bigr)\cup\bigl[\frac56,\frac78\bigr)$}, \\
1 &\text{if $x\in\bigl[\frac29,\frac49\bigr)\cup\bigl[\frac59,\frac79\bigr)\cup\bigl[\frac78,\frac89\bigr)$}, \\
0 &\text{otherwise},
\end{cases}
\label{eq:T-ar}
\end{align}
so that
$$
\lim_{n\to\infty}\frac{\log\hat\Phi_n}n=7.03418177\dotsc.
$$
Comparing \eqref{eq:P-ar} and \eqref{eq:T-ar} we find out that $\varphi(x)\ge\hat\varphi(x)$ for all $x\in[0,1)$
except for $x\in\bigl[\frac15,\frac29\bigr)\cup\bigl[\frac37,\frac49\bigr)\cup\bigl[\frac67,\frac78\bigr)$.
It means that with the choice $\tilde\Phi_n=\prod_{p\le8n}p^{\tilde\varphi(n/p)}$ where
$$
\tilde\varphi(x)
=\max\{\varphi(x),\hat\varphi(x)\}
=\begin{cases}
2 &\text{if $x\in\bigl[\frac16,\frac29\bigr)\cup\bigl[\frac14,\frac27\bigr)\cup\bigl[\frac12,\frac47\bigr)\cup\bigl[\frac56,\frac78\bigr)$}, \\
1 &\text{if $x\in\bigl[\frac18,\frac17\bigr)\cup\bigl[\frac29,\frac14\bigr)\cup\bigl[\frac27,\frac49\bigr)\cup\bigl[\frac47,\frac56\bigr)\cup\bigl[\frac78,\frac89\bigr)$}, \\
0 &\text{otherwise},
\end{cases}
$$
we have the inclusions
\begin{equation}
\tilde\Phi_n^{-1}q_n,\,\tilde\Phi_n^{-1}D_{9n}D_{8n}p_n\in\mathbb Z
\quad\text{for}\; n=0,1,2,\dots,
\label{eq:far}
\end{equation}
and
$$
\lim_{n\to\infty}\frac{\log\tilde\Phi_n}n
=8.79117698\dotsc.
$$

\section{Finale: proof of Theorem~\ref{main} and concluding remarks}
\label{epi}

\begin{proof}[Proof of Theorem~\textup{\ref{main}}]
In the course of our study, we constructed the forms $r_n=q_n\zeta(2)-p_n$, $n=0,1,2,\dots$,
such that their rational coefficients $q_n$ and $p_n$ satisfy~\eqref{eq:far},
while the growth of $r_n$ and $q_n$ as $n\to\infty$ is determined by \eqref{eq:Pan}.
Denoting
$$
\tilde C_2=\lim_{n\to\infty}\frac{\log(\tilde\Phi_n^{-1}D_{9n}D_{8n})}n=8.20882301\dots
$$
and applying \cite[Lemma 2.1]{Ha93} we arrive at the estimate
$$
\mu(\zeta(2))\le\frac{C_0+C_1}{C_0-\tilde C_2}=5.09541178\dots
$$
for the irrationality exponent of $\zeta(2)=\pi^2/6$.
\end{proof}

As discussed in~\cite{DZ}, the sequence of approximations $r_n=q_n\zeta(2)-p_n$
constructed in the proof of Theorem~\ref{main}
can be complemented with the satellite sequence $r_n'=q_n\zeta(3)-p_n'$
of rational approximations to $\zeta(3)$,
which satisfy $\tilde\Phi_n^{-1}D_{9n}D_{8n}^2p_n'\in\mathbb Z$ for $n=0,1,2,\dots$ and
$$
\limsup_{n\to\infty}\frac{\log|r_n'|}n=-C_0=-15.88518998\dots
$$
(cf.~\eqref{eq:Pan}). Because
$$
\lim_{n\to\infty}\frac{\log(\tilde\Phi_n^{-1}D_{9n}D_{8n}^2)}n=16.20882301\hdots>C_0,
$$
the linear forms $\tilde\Phi_n^{-1}D_{9n}D_{8n}^2r_n'\in\mathbb Z\zeta(3)+\mathbb Z$
are unbounded as $n\to\infty$ and, therefore, cannot be used for proving the irrationality of $\zeta(3)$
(which would in this case also lead to the $\mathbb Q$-linear independence of $1$, $\zeta(2)$ and $\zeta(3)$).
With the help of the recurrence equation, used in our proof of Proposition~\ref{prop:int}
and satisfied by the sequences $q_n$, $r_n=q_n\zeta(2)-p_n$ and $r_n'=q_n\zeta(3)-p_n'$, we computed
the first 300 terms of the sequence
$$
\Lambda_n=\gcd(\tilde\Phi_n^{-1}D_{9n}D_{8n}^2q_n,\tilde\Phi_n^{-1}D_{9n}D_{8n}^2p_n,\tilde\Phi_n^{-1}D_{9n}D_{8n}^2p_n'),
\quad n=0,1,2,\dotsc.
$$
The primes involved in the factorisation of $\Lambda_n$ do not seem to possess a structural dependence on~$n$,
and for majority of $n$ these primes $p$ are in the (asymptotically neglectable) range $p\le\sqrt{8n}$.
Nevertheless, the absolute values of the forms
$$
(\Lambda_n\tilde\Phi_n)^{-1}D_{9n}D_{8n}^2r_n\in\mathbb Z\zeta(2)+\mathbb Z
\quad\text{and}\quad
(\Lambda_n\tilde\Phi_n)^{-1}D_{9n}D_{8n}^2r_n'\in\mathbb Z\zeta(3)+\mathbb Z
$$
happen to be simultaneously less than 1 for
\begin{align*}
n&=1,\dots,21,23,\dots,35,37,38,39,41,42,43,47,\dots,50,53,54,64,68,
\\ &\qquad
71,\dots,74,79,80,81,84,85,89,101,102,106,110,113,128,129,178,228
\end{align*}
in the range $n\le300$.

\medskip
It would be nice to investigate arithmetically the other classical hypergeometric instances
from Bailey's and Slater's books \cite{Ba35,Sl}: the philosophy is that behind
any hypergeometric transformation there is some interesting arithmetic.
Already the previously achieved irrationality measure for $\zeta(2)$ in \cite{RV96}
and the best known irrationality measure for $\zeta(3)$ in \cite{RV01},
both due to Rhin and Viola, have deep hypergeometric roots
(see~\cite{Zu04}). Another example in this direction is the hypergeometric construction
of rational approximations to $\zeta(4)$ in~\cite{Zu03}.

\begin{acknowledgements}
I am deeply thankful to St\'ephane Fischler who has re-attract\-ed my attention to~\cite{Zu07}
and forced me to write the details of the general construction there.
This has finally grown up in a joint project with Simon Dauguet.

My special thanks go to Yuri Nesterenko for many helpful comments on initial versions of the note,
and I also thank Raffaele Marcovecchio for related discussions and corrections.
Finally, I acknowledge a healthy criticism of the anonymous referee
that helped me to improve the presentation.
\end{acknowledgements}


\end{document}